\newtheorem{theorem}{Theorem}[section]
\newtheorem{proposition}[theorem]{Proposition}
\newtheorem{lemma}[theorem]{Lemma}
\theoremstyle{definition}
\newtheorem{example}[theorem]{Example}
\newtheorem{proposition-definition}[theorem]{Proposition-Definition}
\newtheorem{corollary}[theorem]{Corollary}
\theoremstyle{remark}
\newtheorem{remark}[theorem]{Remark}
\numberwithin{equation}{section}
\begin{document}

% \title[short text for running head]{full title}
\title{On exchange matrices from string diagrams}

%    Only \author and \address are required; other information is
%    optional.  Remove any unused author tags.

%    author one information
% \author[short version for running head]{name for top of paper}
\author{Peigen Cao}
\address{Einstein Institute of Mathematics, Edmond J. Safra Campus, The Hebrew University of Jerusalem, Jerusalem
91904, Israel}
\email{peigencao@126.com}

%\author{David Kazhdan}
%\address{Einstein Institute of Mathematics, Edmond J. Safra Campus, The Hebrew University of Jerusalem, Jerusalem 91904, Israel}
%\email{kazhdan@math.huji.ac.il}

%    \subjclass is required.
\subjclass[2010]{13F60}

\date{}

\dedicatory{}
\keywords{cluster algebra, exchange matrix, string diagram, source-sink extension}

%\keywords{cluster algebra, string diagrams}

%    Abstract is required.
\begin{abstract}Inspired by Fock-Goncharov's amalgamation procedure \cite{Fock-Goncharov-2006}, Shen-Weng introduced string diagrams in \cite{Shen-Weng-2021}, which are very useful to describe many interesting skew-symmetrizable matrices closely related with Lie theory. In this paper, we prove that the skew-symmetrizable matrices from string diagrams are in the smallest class $\mathcal P^\prime$ of skew-symmetrizable matrices containing the $1\times 1$ zero matrix and  closed under mutations and source-sink extensions. This result applies to the exchange matrices of  cluster algebras from double Bruhat cells, unipotent cells, double Bott-Samelson cells and so on.

Our main result can be used to explain why many skew-symmetrizable matrices from Lie theory have reddening sequences. It can be also used to prove some interesting results regarding non-degenerate potentials on many quivers from Lie theory.
\end{abstract}

\maketitle

%    Text of article.

\tableofcontents

\section{Introduction}
 Cluster algebras $\mathcal A$ were invented by Fomin and Zelevinsky \cite{fz_2002}  as a combinatorial approach to the dual canonical bases of quantized enveloping
algebras \cites{lusztig_1990,lusztig_1991,kashiwara_1990}. Such algebras often arise as the coordinate rings of spaces, such as double Bruhat cells \cite{bfz_2005},  unipotent cells \cite{gls_2011}, double Bott-Samelson cells \cite{Shen-Weng-2021} and arise as the Grothendieck rings of certain monoidal subcategories of the representations of quantum affine algebras \cites{HL_2010,HL_2013}.

An important input to define a cluster algebra $\mathcal A$ is a skew-symmetrizable matrix $B$, called an {\em exchange matrix} of $\mathcal A$. In this paper, we show that the exchange matrices of many interesting cluster algebras from Lie theory are in the smallest class $\mathcal P'$ of skew-symmetrizable matrices containing the $1\times 1$ zero matrix and  closed under mutations and source-sink extensions. This can be used to explain why many skew-symmetrizable matrices from Lie theory have reddening sequences in the sense of \cite{keller_demonet_2020}. It can be also used to prove some interesting results regarding non-degenerate potentials \cite{DWZ_2008} on many quivers from Lie theory.

We remark that source-sink extensions play an important role in Muller's study  \cite{muller-2013} of locally acyclic cluster algebras and in Fei-Weyman's study \cite{Fei-Weyman-2017} of cluster models of upper cluster algebras.

Now we are going to clarify the set-up to introduce our main result in this paper. Let ${\bf i}=(i_1,\cdots,i_{\ell({\bf i})})$ be a sequence with $1\leq i_j\leq n$, where $n$ is a fixed positive integer. Such a sequence is called a {\em $[1,n]$-sequence} with length $\ell({\bf i})$.

Let $A=(a_{ij})_{n\times n}$ be a symmetrizable generalized Cartan matrix and $({\bf i}, {{\bf j}})$ a pair of $[1,n]$-sequences. Then we have a trapezoid $S_{\bf j}^{\bf i}(A)$ and we can consider the triangulations of $S_{\bf j}^{\bf i}(A)$ (see Section \ref{sec3} for details). We remark that one can just identify a triangulation of $S_{\bf j}^{\bf i}(A)$ with a shuffle of the pair $({\bf i}, {{\bf j}})$ of $[1,n]$-sequences.

For each triangulation $T$ of $S_{\bf j}^{\bf i}(A)$, we can define a string diagram $D(S_{\bf j}^{\bf i}(A),T)$. Then we can use the string diagram $D(S_{\bf j}^{\bf i}(A),T)$ to define a skew-symmetrizable matrix $B$, which is called the {\em exchange matrix} of $T$. The matrix $B$ is closely related with the matrices (or quivers, in skew-symmetric case) appearing in \cites{bfz_2005,buan_iyama_reiten_scott_2009,gls_2011,Shen-Weng-2021}.  More precisely:
\begin{itemize}
    \item If $A$ is a Cartan matrix of finite type and the triangulation $T$ (identified with a shuffle of $({\bf i}, {\bf j})$) of  $S_{\bf j}^{\bf i}(A)$ corresponds a reduced word for a pair $(u,v)$ of elements of the Weyl group of the Cartan matrix $A$, then the matrix $B$ here is exactly the same with  the principal part (i.e., exchangeable part) of the extended exchange matrix constructed in \cite{bfz_2005} (see Example \ref{ex:bfz}).  
    \item If $A$ is symmetric, then the matrix $B$ is skew-symmetric and thus corresponds to a quiver $Q_B$.
     In the case of ${\bf i}=\emptyset$ and ${\bf j}$ corresponding to a reduced word of an element of the Weyl group of the Cartan matrix $A$, the quiver $Q_B$ corresponds to the principal part of the ice quiver constructed in \cites{buan_iyama_reiten_scott_2009,gls_2011} (see Example \ref{ex:gls}). 
    \item Generally, the matrix $-B^{\rm T}$ corresponds to the principal part of the matrix constructed in \cite{Shen-Weng-2021}*{Section 3}, because we slightly changed the construction of the exchange matrix of a string diagram in \cite{Shen-Weng-2021} so that it fits well with the matrices or quivers appearing in \cites{bfz_2005,gls_2011}.

    %This in particular implies that our main result (Theorem \ref{maithm} below) also applies to the exchange matrices of cluster algebras from from moduli spaces of $G$-local systems on disk with $k$ marked points on the boundary, where $k\geq 3$, cf. \cite{IOS-2022}*{Proof of Lemma 4.5}. 
\end{itemize}

The following is our main result in this paper.

\begin{theorem}[Theorem \ref{thm:string}]
\label{maithm}
Let $T$ be a triangulation of $S_{\bf j}^{\bf i}(A)$ and $B$ the exchange matrix of $T$. Then $B$ is in the smallest class $\mathcal P'$ of skew-symmetrizable matrices containing the $1\times 1$ zero matrix and  closed under mutations and source-sink extensions.
\end{theorem}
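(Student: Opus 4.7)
The plan is to proceed by induction on the total length $N = \ell({\bf i}) + \ell({\bf j})$ of the pair of sequences defining the trapezoid. The base case $N = 0$ gives a trapezoid whose exchange matrix is the $1\times 1$ zero matrix, which belongs to $\mathcal{P}'$ by definition. For the inductive step my aim is to exhibit the exchange matrix $B$ of $T$, after first replacing $T$ by a more convenient triangulation, as a sequence of source-sink extensions of the exchange matrix of a strictly smaller trapezoid.

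A first simplification: any two triangulations of $S_{\bf j}^{\bf i}(A)$ are connected by a finite sequence of diagonal flips, and in the string-diagram setup such flips correspond to mutations of the associated exchange matrix (this compatibility should be established in an earlier section of the paper). Since $\mathcal{P}'$ is closed under mutations, it therefore suffices to exhibit, for each pair $({\bf i}, {\bf j})$, one convenient triangulation $T_0$ whose exchange matrix lies in $\mathcal{P}'$; the general $T$ then inherits membership by mutating to $T_0$.

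I then pick $T_0$ adapted to a corner. Let $i_1$ be the first letter of ${\bf i}$ (the argument for the first letter of ${\bf j}$ is symmetric) and arrange $T_0$ so that all triangles meeting the first edge of the ${\bf i}$-side are concentrated at the corresponding corner of the trapezoid. The vertices of $D(S_{\bf j}^{\bf i}(A), T_0)$ produced by these corner triangles form a short initial segment $I$; each vertex in $I$ lies either on the boundary or at the first occurrence of the $i_1$-string, so its interactions with the rest of the diagram are governed by a small local picture. A direct reading of the arrow rules defining the exchange matrix of a string diagram then supplies an ordering of $I$ along which the vertices are iteratively sources (respectively, iteratively sinks) in the growing subquiver. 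Peeling them off one at a time realizes $B$ as a sequence of source-sink extensions of the exchange matrix $B'$ of the induced triangulation of the smaller trapezoid $S_{\bf j}^{(i_2, \ldots, i_{\ell({\bf i})})}(A)$; since $B' \in \mathcal{P}'$ by the induction hypothesis, we conclude $B \in \mathcal{P}'$.

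The main obstacle I anticipate is precisely the local verification in the previous paragraph: checking that the vertices of $I$ at the chosen corner really can be listed as iterated sources or sinks. This amounts to comparing the two sources of arrows in a string diagram---horizontal arrows along a common string and diagonal arrows across a triangle, both weighted by the Cartan entries $a_{i_1 k}$---and ruling out unwanted arrows from the interior into $I$ in the wrong direction. I expect this to split into a small number of configurations, indexed by whether $i_1$ coincides with the first letter of ${\bf j}$ and by which indices label the triangles adjacent to the corner, each of which should yield by direct diagram inspection. Once this local claim is secured the induction closes, and Theorem \ref{maithm} follows.
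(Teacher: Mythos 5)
Your overall skeleton --- induct on the length, use flip--mutation compatibility (Proposition \ref{pro:sw2}) to move to a convenient triangulation, then peel off the vertex created by the first letter as a source-sink extension of the exchange matrix of the smaller trapezoid --- is the same as the paper's. But the step you yourself flag as "the main obstacle" is a genuine gap, and as you have stated it the claim is false: no choice of triangulation makes the corner vertex a source or a sink by "direct diagram inspection." The cleanest counterexample is already the case ${\bf i}=\emptyset$, where $S_{\bf j}^{\emptyset}(A)$ is a triangle with a \emph{unique} triangulation, so there is no freedom to choose $T_0$ at all; and in Example \ref{ex:gls} the vertex ${1\choose 1}$ (the one that must be peeled off to pass from ${\bf j}$ to ${\bf j}_{\geq 2}$) has both an incoming arrow from ${1\choose 2}$ and outgoing arrows to ${2\choose 1}$, hence is neither a source nor a sink in $Q_{\bf j}$. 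More generally, the quivers realized by triangulations of a fixed trapezoid form only a small subset of the mutation class, so "arranging the triangles at the corner" cannot be expected to produce the needed source.

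What is actually required --- and what the paper supplies in Lemma \ref{lem:empty} --- is a preparatory mutation sequence $\overset{\leftarrow}{\mu}=\mu_{j_1\choose 2}\circ\cdots\circ\mu_{j_1\choose m}$ running through \emph{all the other closed strings on the $j_1$-th level}, after which the column of ${j_1\choose 1}$ becomes non-negative; only then is $\overset{\leftarrow}{\mu}(B)$ a source-sink extension of $\overset{\leftarrow}{\mu}(B_{\geq 2})$, and one concludes using closure of $\mathcal P'$ under mutation. Verifying this sign condition is not a purely local check at the corner: it is a case analysis on how an arbitrary closed string $x$ on another level $k$ attaches to the level-$j_1$ line (zero, one, or two inclined arrows weighted by $a_{kj_1}$), tracked through the whole mutation sequence. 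Separately, the paper handles general $({\bf i},{\bf j})$ not by working at a corner of the trapezoid but by first reducing to the triangle case $(\emptyset,{\bf i}^{-1}\circ{\bf j})$ via the reflection moves $L_u,L_d,R_u,R_d$ (Lemma \ref{pro:move} and Proposition \ref{cor:move}); you would need some such reduction, or a genuinely new argument, to control the corner configurations when both ${\bf i}$ and ${\bf j}$ are nonempty. Until the analogue of Lemma \ref{lem:empty} is proved, your induction does not close.
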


As applications, we have the following corollaries. We refer to \cites{keller_demonet_2020,bm_2020} for the notation of reddening sequences and  \cite{DWZ_2008} for the basic notions on quivers with potentials $(Q,W)$ and Jacobian algebras $J(Q,W)$.

\begin{corollary}[Corollary \ref{app1}, \cite{Shen-Weng-2021}*{Section 4}]
Let $B$ be the exchange matrix of a triangulation $T$ of $S_{\bf j}^{\bf i}(A)$. Then $B$ has a reddening sequence.
\end{corollary}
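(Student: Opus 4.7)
The plan is to derive the corollary by induction on the construction of $\mathcal{P}'$. Let $\mathcal{R}$ denote the class of skew-symmetrizable matrices that admit a reddening sequence. Since Theorem~\ref{maithm} places $B$ in the smallest class containing the $1\times 1$ zero matrix and closed under mutations and source-sink extensions, it suffices to verify that $\mathcal{R}$ itself contains $[0]$ and is closed under these two operations; then $\mathcal{P}'\subseteq \mathcal{R}$ and the corollary follows.

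The base case is immediate: for the $1\times 1$ zero matrix, a single mutation at the unique vertex sends the initial $C$-matrix $[1]$ to $[-1]$, so $(1)$ is a reddening sequence. Closure under mutations of the initial matrix is standard (\emph{cf.}\ Keller--Demonet): having a reddening sequence corresponds to the existence of a green-to-red path in the exchange graph with principal coefficients, and changing the initial seed by a single mutation merely shifts the starting point of such a path; the same path, conjugated suitably by the initial mutation, remains reddening.

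The substantive step is closure under source-sink extensions. Let $B'$ be an $(n+1)\times(n+1)$ extension of $B$ in which the new index $n+1$ is a source or a sink, and let $\mathbf{s}=(k_1,\ldots,k_m)$ be a reddening sequence for $B$. I would produce a reddening sequence for $B'$ by adjoining one mutation at the new vertex: appending it to $\mathbf{s}$ when $n+1$ is a sink, and prepending it when $n+1$ is a source. To verify this, I would track the $C$-matrix with principal coefficients along the augmented sequence. The source/sink hypothesis forces sign coherence of the entries in row/column $n+1$ throughout the mutations at $k_1,\ldots,k_m$; this shows simultaneously that the $c$-vector at $n+1$ stays green (resp. red) during this phase and that the $c$-vectors of the first $n$ indices evolve in parallel with their behaviour in the $n\times n$ setting, up to a controlled $e_{n+1}$-component. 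The final (or initial) mutation at $n+1$ then flips its $c$-vector to complete the reddening.

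The principal obstacle I anticipate is this last bookkeeping: making precise the sense in which the first $n$ $c$-vectors of $B'$ \emph{track} those of $B$ under the mutations $k_1,\ldots,k_m$. I expect the key input to be sign-coherence of $c$-vectors together with the explicit form of the mutation rule at a source or sink, which prevents the new vertex from entangling with the others in a way that would destroy the inductive structure. A more conceptual alternative, which might streamline the argument, is to appeal directly to a result in the literature (such as \cite{Shen-Weng-2021}*{Section 4} or a result of Muller-type on freezing/unfreezing) that already asserts preservation of reddening sequences under source-sink extensions.
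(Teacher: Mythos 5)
Your proposal is correct and follows essentially the same route as the paper: the paper's entire proof is to combine Theorem \ref{maithm} with \cite{bm_2020}*{Theorem 3.3}, which is precisely the statement that the class of matrices admitting reddening sequences contains the $1\times 1$ zero matrix and is closed under mutation and (triangular, hence source-sink) extension. Your sketch of the closure-under-extension step is only an outline of the $C$-matrix bookkeeping that \cite{bm_2020} (via sign-coherence) carries out in full, but your stated fallback of citing that result directly is exactly what the paper does.
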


\begin{corollary}[Corollary \ref{app2}]
Let $B$ be the exchange matrix of a triangulation $T$ of $S_{\bf j}^{\bf i}(A)$.
Suppose that $A$ is symmetric. In this case, $B$ is skew-symmetric and thus corresponds to a quiver $Q_B$. Then $Q_B$ has a unique non-degenerate potential $W_{B}$ (up to right equivalence) which is rigid and its Jacobian algebra $J(Q_{B}, W_{B})$ is finite-dimensional.
\end{corollary}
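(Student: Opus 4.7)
The plan is to introduce the class $\Ccal$ of skew-symmetric integer matrices $B$ whose associated quiver $Q_B$ admits a non-degenerate potential $W_B$ that is rigid, has finite-dimensional Jacobian algebra $J(Q_B, W_B)$, and is unique up to right equivalence. By Theorem \ref{maithm}, the exchange matrix $B$ lies in $\mathcal{P}'$, so it suffices to verify that $\Ccal$ contains the $1\times 1$ zero matrix and is closed under mutations and under source-sink extensions. The base case is immediate: the quiver with one vertex and no arrows admits only the zero potential, which is trivially rigid, non-degenerate, and has Jacobian algebra equal to the ground field.

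For closure under mutations, I would invoke the mutation theory of quivers with potentials from \cite{DWZ_2008}. Mutation preserves rigidity (hence finite-dimensionality of the Jacobian algebra, since rigid quivers with potentials have finite-dimensional Jacobian algebras), preserves non-degeneracy, and induces an involution on the set of right-equivalence classes of reduced non-degenerate quivers with potentials at any fixed vertex. Consequently uniqueness is preserved: two inequivalent non-degenerate potentials on the mutated quiver would mutate back to two inequivalent non-degenerate potentials on the original quiver, contradicting the inductive hypothesis.

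For closure under source-sink extensions, let $Q' = Q_{B'}$ be obtained from $Q = Q_B$ by adjoining a new vertex $v$ together with finitely many arrows all directed out of $v$ (the sink case is entirely symmetric). The crucial observation is that every oriented cycle in $Q'$ already lies in $Q$, since $v$ is a source; in particular, potentials on $Q'$ coincide with potentials on $Q$, and setting $W_{B'} := W_B$ yields a non-degenerate rigid potential on $Q'$. Finite-dimensionality of $J(Q', W_{B'})$ follows because every path in $Q'$ is either a path in $Q$ or consists of one of the finitely many new arrows out of $v$ followed by a path in $Q$, so a finite spanning set of $J(Q, W_B)$ induces a finite spanning set of $J(Q', W_{B'})$.

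The main obstacle is transferring uniqueness through source-sink extensions. Let $W'$ be any non-degenerate potential on $Q'$; viewed as a potential on $Q$, it inherits non-degeneracy from $Q'$ because any sequence of mutations at vertices of $Q$, performed on $(Q', W')$, restricts on the $Q$-subquiver to the corresponding sequence of mutations of $(Q, W')$, since all length-two paths through a mutated vertex of $Q$ lie within $Q$. Hence any 2-cycle produced in the $Q$-mutation would also appear in the $Q'$-mutation, which is excluded by non-degeneracy of $W'$ on $Q'$. By the inductive hypothesis there is a right equivalence $\phi \colon \widehat{kQ} \to \widehat{kQ}$ sending $W'$ to $W_B$; extending $\phi$ by the identity on the new arrows yields an automorphism of $\widehat{kQ'}$, which is well defined because for $i, j \neq v$ the paths from $i$ to $j$ in $Q'$ coincide with those in $Q$. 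This realizes the desired right equivalence $W' \sim W_{B'}$ on $Q'$ and completes the induction.
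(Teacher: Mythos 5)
The paper's own proof is a one-line citation: Theorem \ref{thm:string} places $B$ in $\mathcal P'\subseteq\mathcal P$, and \cite{lad-2013}*{Theorem 4.6} already asserts that every quiver whose matrix lies in $\mathcal P$ carries a unique non-degenerate potential, which is rigid with finite-dimensional Jacobian algebra. You instead re-derive the relevant closure properties from scratch: you introduce the class $\mathcal C$ of quivers with this property and check that it contains the one-point quiver and is closed under mutation and under source-sink extension. That is a legitimate alternative route (it is essentially the proof of the cited theorem, specialised from triangular extensions to source-sink extensions), and your key structural observation --- a source or sink vertex lies on no oriented cycle, so potentials on $Q'$ are exactly potentials on $Q$ and a right equivalence of $\widehat{kQ}$ extends by the identity on the new arrows --- is exactly the right mechanism.

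Two steps need repair. First, your justification that a non-degenerate $W'$ on $Q'$ is non-degenerate as a potential on $Q$ --- ``all length-two paths through a mutated vertex of $Q$ lie within $Q$'' --- is false: if $v$ is a source with an arrow $a\colon v\to k$, then $v\xrightarrow{a} k\xrightarrow{b} j$ is a length-two path through $k$ that leaves $Q$; the mutation $\mu_k$ creates arrows $[ba]\colon v\to j$ and reverses $a$, so $v$ ceases to be a source and new oriented cycles through $v$ appear in $\mu_k(Q')$. The conclusion you want is still true, but it has to come from the result of \cite{DWZ_2008} that restriction of a QP to a full subquiver commutes (up to right equivalence) with mutations at vertices of that subquiver, so that non-degeneracy passes to restrictions; the quoted claim does not deliver it. Second, you obtain mutation-invariance of finite-dimensionality of the Jacobian algebra from the assertion that rigid QPs have finite-dimensional Jacobian algebras; that implication is not proved in \cite{DWZ_2008} and should not be relied on. Cite instead the theorem there that finite-dimensionality of the Jacobian algebra is itself a mutation invariant, and carry finite-dimensionality as a separate inductive hypothesis, as you already do in the source-sink step. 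With those two substitutions your induction closes and gives the corollary without appealing to \cite{lad-2013}.
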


 The paper is organized as follows. In Section \ref{sec2} we recall the definitions of mutations of matrices and quivers. Then we give the definitions of source-sink extensions and the class $\mathcal P'$. In Section \ref{sec3} we recall the string diagrams introduced by Shen-Weng \cite{Shen-Weng-2021} and explain how to construct a skew-symmetrizable matrix from a string diagram. In Section \ref{sec4} we give the proofs of Theorem \ref{maithm} and its corollaries.

\vspace{2mm}
{\bf Acknowledgement.} I am very grateful to Prof. Kazhdan for providing me with a very comfortable working environment and to Linhui Shen for his helpful comments and interesting discussions. I would also like to thank Jiarui Fei for answering my questions and Fan Qin for discussions on cluster algebras. This project is supported by the ERC (Grant No. 669655) and the NSF of China (Grant No. 12071422).

\section{Preliminaries}\label{sec2}

\subsection{Mutations of matrices and quivers}
Given a set of vertices $I$ and let $B=(b_{ij})_{i,j\in{I}}$ be an integer matrix. We say that $B$ is {\em skew-symmetrizable} if there is an integer diagonal matrix $S={\rm Diag}(s_i)_{i\in I}$ with $s_i>0$ such that $SB$ is skew-symmetric.

Let $B=(b_{ij})_{i,j\in{I}}$ be a skew-symmetrizable integer matrix.
The {\em mutation} of $B$ in direction $k\in I$ is defined to be the new integer matrix $\mu_k(B)= B^\prime=(b_{ij}^\prime)_{i,j\in I}$ given by
$$b_{ij}^\prime=\begin{cases}-b_{ij},&i=k \text{ or } j=k;\\b_{ij}+{\rm sgn}(b_{ik}){\rm max}\{b_{ik}b_{kj},0\},&\text{otherwise.}
\end{cases}$$

It can be checked that $\mu_k(B)$ is still skew-symmetrizable and $\mu_k\mu_k(B)=B$.

If $B=(b_{ij})_{i,j\in I}$ is skew-symmetric, we can represent it using a quiver $Q_{B}$. The vertex set of $Q_{B}$ is $I$. The entry $b_{ij}>0$ if and only if there are $b_{ij}$ many arrows from $j$ to $i$ in $Q_{B}$. We call $Q_{B}$ the {\em usual quiver} of $B$ to distinguish the terminology ``coloured quiver" defined in Section \ref{sec3}. Notice that we can recover $B$ from $Q_B$, because 
$$b_{ij}=|j\rightarrow i|-|i\rightarrow j|.$$

A quiver $Q$ is called a {\em cluster quiver}, if it has no cycles of length $\leq 2$. Clearly, the usual quiver $Q_B$ of a skew-symmetric matrix $B$ is a cluster quiver.

Let $Q$ be a cluster quiver with vertex set $I$. The {\em mutation} of $Q$ at vertex $k\in I$ is the new quiver $\mu_k(Q)$ obtained from $Q$ by the following steps.
\begin{itemize}
    \item [(i)] For each subquiver $i\overset{a}{\longrightarrow} k\overset{b}{\longrightarrow} j$, add a new arrow $i\overset{[ba]}{\longrightarrow} j$;
    \item[(ii)] Reverse all arrows incident with $k$;
    \item[(iii)] Remove the arrows in a maximal set of pairwise disjoint
2-cycles (e.g. $
\xymatrix{\bullet\ar@<.7ex>[r]\ar@<-.7ex>[r]&  \bullet\ar[l]
}
$ yields  $
\xymatrix{\bullet\ar[r]&  \bullet
}$, `$2$-reduction'.)
\end{itemize}

Notice that the resulting quiver $\mu_k(Q)$ is  still a cluster quiver and we  have $\mu_k(\mu_k(Q))=Q$.

\begin{example}The following picture shows how the first quiver changes under the above three steps when we do mutation at $1$.

\[\xymatrixrowsep{5mm}
\xymatrixcolsep{5mm}
\xymatrix{3\ar[rr]& &1\ar[ld] &  3\ar[rr]\ar@<.5ex>[rd]& &1\ar[ld] &  3\ar@<.5ex>[rd]& &1\ar[ll] &  3& &1\ar[ll]  
\\
&2\ar[lu]& & & 2\ar@<.5ex>[lu]& & &2\ar@<.5ex>[lu]\ar[ru]& & & 2\ar[ru]&}
\]
\end{example}

Let $B$ be a skew-symmetric matrix and $Q_B$ its usual quiver. The mutation $\mu_k$ of $B$ is compatible with that of $Q_B$, that is, we have
 $$\mu_k(Q_B)=Q_{\mu_k(B)}.$$

\subsection{Source-sink extensions and the class \texorpdfstring{$\mathcal P'$}{Lg}}

Let $B=(b_{ij})_{i,j\in I}$ and $B^\prime=(b_{ij}^\prime)_{i,j\in I^\prime}$ be two skew-symmetrizable matrices. Put $J=I\sqcup I^\prime$ and let $A=(a_{ij})$ be a $J\times J$ skew-symmetrizable matrix. We say that $A$ is a {\em triangular extension} of $B$ and $B^\prime$ if the following three conditions hold.
\begin{itemize}
    \item  $a_{ij}=b_{ij}$ for any $i,j\in I$;
    \item $a_{ij}=b_{ij}'$ for any $i,j\in I'$;
    \item Either $a_{ij}\geq 0$ holds for any $i\in I, j\in I^\prime$ or  $a_{ij}\leq 0$ holds for any $i\in I, j\in I^\prime$.
\end{itemize}

Let $A=(a_{ij})$ be a triangular extension of $B=(b_{ij})_{i,j\in I}$ and $B'=(b_{ij}')_{i,j\in I'}$. If $I'$ has only one element, that is, $B'$ is the $1\times 1$ zero matrix, then we say that $A$ is a {\em source-sink extension} of $B$.

Notice that the notation of triangulation extensions and source-sink extensions make sense for quivers if we restrict to skew-symmetric case.
\begin{example}
Let $Q_B=\xymatrix{1\ar[r]&2}$ and $Q_{B^\prime}=\xymatrix{3\ar[r]&4}$. Then the following quiver is a triangular extension of $Q_B$ and $Q_{B^\prime}$
\[
\xymatrix{1\ar[r]\ar[rd]\ar[d]&2\ar@<.5ex>[d]\ar@<-.5ex>[d]\\
3\ar[r]&4}
\]
The following quiver is a soure-sink extension of $Q_B$ by the single point $5$.
\[
\xymatrix{1\ar[r]\ar[rd]&2\ar@<.5ex>[d]\ar@<-.5ex>[d]\\
&5}
\]
\end{example}

Let $\mathcal P$ be the class of  skew-symmetrizable matrices defined by the following properties.
\begin{itemize}
    \item  The $1\times 1$ zero matrix belongs to $\mathcal P$;
    \item The class $\mathcal P$ is closed under mutations;
    \item If $B, B^\prime\in\mathcal P$ and $A$ is a triangular extension of $B$ and $B^\prime$, then $A\in\mathcal P$. In other words, the class $\mathcal P$ is closed under triangular extensions.
\end{itemize}

Let $\mathcal P'$ be the class of  skew-symmetrizable matrices defined by the following properties.
\begin{itemize}
    \item [(a)] The $1\times 1$ zero matrix belongs to $\mathcal P'$;
    \item[(b)] The class $\mathcal P'$ is closed under mutations;
    \item[(c)] The class $\mathcal P'$ is closed under source-sink extensions.
\end{itemize}

\begin{remark}(i) The classes $\mathcal P$ and $\mathcal P^\prime$ were studied in \cites{lad-2013,bm_2020}. Clearly, we have $\mathcal P'\subseteq \mathcal P$. 

(ii) The skew-symmetrizable matrices in $\mathcal P$ enjoy many nice properties inherited from the $1\times 1$ zero matrix, for example, the existence of reddening sequences in the sense of \cite{keller_demonet_2020}, cf. \cite{bm_2020}*{Theorem 3.3}, and the uniqueness of non-degenerate potential (resp. rigid potential) up to right equivalence \cite{DWZ_2008} in the quiver case, cf. \cite{lad-2013}*{Theorem 4.6}.
\end{remark}

\section{Triangulations, string diagrams and their exchange matrices}\label{sec3}
Inspired by Fock-Goncharov's amalgamation procedure \cite{Fock-Goncharov-2006}, Shen-Weng introduced string diagrams in \cite{Shen-Weng-2021}, which are very useful to describe many interesting skew-symmetrizable matrices closely related with Lie theory.
\subsection{Exchange matrices from string diagrams}

Recall that a {\em symmetrizable generalized Cartan matrix} $A=(a_{ij})_{n\times n}$ is an integer matrix satisfying 
\begin{itemize}
    \item $a_{ii}=2$ for any $i=1,\cdots,n$;
    \item $a_{ij}\leq 0$ for any $i\neq j$;
    \item $a_{ij}=0$ if and only if $a_{ji}=0$;
    \item there is an integer diagonal matrix $S={\rm Diag}(s_1,\cdots,s_n)$ with $s_i>0$ such that $SA$ is a symmetric matrix. 
\end{itemize}

From now on, by a Cartan matrix we always mean a symmetrizable generalized Cartan matrix.

Let ${\bf i}=(i_1,\cdots,i_{\ell({\bf i})})$ be a sequence with $1\leq i_j\leq n$. Such a sequence is called a {\em $[1,n]$-sequence} with length $\ell({\bf i})$.

Given a pair $({\bf i},{\bf j})$ of $[1,n]$-sequences, we can draw a trapezoid $S_{\bf j}^{\bf i}:=S_{\bf j}^{\bf i}(A)$ with bases of lengths $\ell({\bf i})$ and $\ell({\bf j})$. We label the $i$-th unit interval of the top base using the $i$-th integer in ${\bf i}$ and label the $j$-th unit interval of the bottom base using the $j$-th integer in ${\bf j}$.

A {\em triangulation} $T$ of $S_{\bf j}^{\bf i}(A)$ a collection of line segments called {\em diagonals}, each of which connects a marked point on the top to a marked point on the bottom and they divide the trapezoid $S_{\bf j}^{\bf i}(A)$ into triangles.

\begin{remark}(i) The Cartan matrix $A$ plays no role in $S_{\bf j}^{\bf i}=S_{\bf j}^{\bf i}(A)$ and its triangulations $T$, but it will be used when we try to define a skew-symmetrizable matrix from $(S_{\bf j}^{\bf i}(A), T)$.

(ii) We can identify  a triangulation $T$ of $S_{\bf j}^{\bf i}=S_{\bf j}^{\bf i}(A)$ with a shuffle of the pair $({\bf i},{\bf j})$ of $[1,n]$-sequences, see the example followed.
\end{remark}

\begin{example}\label{ex:tri}
Below is a triangulation $T$ of the trapezoid $S_{\bf j}^{\bf i}=S_{\bf j}^{\bf i}(A)$, where $({\bf i},{\bf j})=((1,2,3), (3,1,1,3,2))$.

$$
\begin{tikzpicture}[scale=0.50]
  \node (A1) at (3,0) {$\bullet$};
    \node (A2) at (6,0) {$\bullet$};
       \node (A3) at (9,0) {$\bullet$};
          \node (A4) at (12,0) {$\bullet$};

\draw (A1.center) --node [above] {$1$} (A2.center);
\draw (A2.center) --node [above] {$2$} (A3.center);
\draw(A3.center) --node [above] {$3$} (A4.center);

\node (B1) at (0,-4) {$\bullet$};
    \node (B2) at (3,-4) {$\bullet$};
       \node (B3) at (6,-4) {$\bullet$};
          \node (B4) at (9,-4) {$\bullet$};
              \node (B5) at (12,-4) {$\bullet$};
                  \node (B6) at (15,-4) {$\bullet$};
                  
                  \draw (B1.center) --node [below] {$3$} (B2.center);
                  \draw (B2.center) --node [below] {$1$} (B3.center);
                  \draw (B3.center) --node [below] {$1$} (B4.center);
                  \draw (B4.center) --node [below] {$3$} (B5.center);
                  \draw (B5.center) --node [below] {$2$} (B6.center);
                  
                  \draw (A1.center) -- (B1.center);
                  \draw (A1.center) -- (B2.center);
                  \draw (A1.center) -- (B3.center);
                  \draw (A1.center) -- (B4.center);
                  \draw (A2.center) -- (B4.center);
                  \draw (A3.center) -- (B4.center);
                  \draw (A4.center) -- (B4.center);
                  \draw (A4.center) -- (B5.center);
                  \draw (A4.center) -- (B6.center);

\end{tikzpicture}
$$

The triangulation above is identified with the shuffle $(3,1,1,1,2,3,3,2)$ of $({\bf i},{\bf j})=((1,2,3), (3,1,1,3,2))$.
\end{example}

Let $T$ be a triangulation of  $S_{\bf j}^{\bf i}(A)$, we can define a {\em string diagram} $D(T)=D(S_{\bf j}^{\bf i}(A), T)$, which is given as follows:
\begin{itemize}
    \item Draw $n$  horizontal lines across the trapezoid $S_{\bf j}^{\bf i}(A)$; call the $k$-th line from the top the {\em $k$-th level}.
    \item For each triangle whose bottom edge labeled by $k\in\{1,\cdots,n\}$, we put a node labeled by $k$ on the $k$-th level within the triangle.
\item For each triangle whose top edge labeled by $k\in\{1,\cdots,n\}$, we put a node labeled by $-k$ on the $k$-th level within the triangle.
\item The nodes cut the horizontal lines into line segments called {\em string}.  Strings with nodes at both ends
are called {\em closed strings}. The remaining strings are called {\em open strings}.
\end{itemize}

We remark that the Cartan matrix $A$ still plays no role in the definition of string diagrams.

\begin{example}\label{ex:string}
Blue part of Figure \ref{fig:1} shows the string diagram $D(S_{\bf j}^{\bf i}(A), T)$ associated to $(S_{\bf j}^{\bf i}(A), T)$ in Example \ref{ex:tri}.

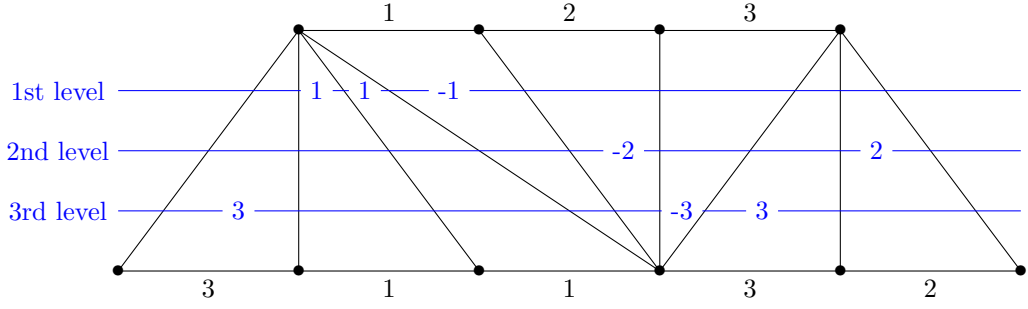
\begin{figure}
  \centering
\begin{tikzpicture}[scale=0.80]
  \node (A1) at (3,0) {$\bullet$};
    \node (A2) at (6,0) {$\bullet$};
       \node (A3) at (9,0) {$\bullet$};
          \node (A4) at (12,0) {$\bullet$};

\draw (A1.center) --node [above] {$1$} (A2.center);
\draw (A2.center) --node [above] {$2$} (A3.center);
\draw(A3.center) --node [above] {$3$} (A4.center);

\node (B1) at (0,-4) {$\bullet$};
    \node (B2) at (3,-4) {$\bullet$};
       \node (B3) at (6,-4) {$\bullet$};
          \node (B4) at (9,-4) {$\bullet$};
              \node (B5) at (12,-4) {$\bullet$};
                  \node (B6) at (15,-4) {$\bullet$};
                  
                  \draw (B1.center) --node [below] {$3$} (B2.center);
                  \draw (B2.center) --node [below] {$1$} (B3.center);
                  \draw (B3.center) --node [below] {$1$} (B4.center);
                  \draw (B4.center) --node [below] {$3$} (B5.center);
                  \draw (B5.center) --node [below] {$2$} (B6.center);
                  
                  \draw (A1.center) -- (B1.center);
                  \draw (A1.center) -- (B2.center);
                  \draw (A1.center) -- (B3.center);
                  \draw (A1.center) -- (B4.center);
                  \draw (A2.center) -- (B4.center);
                  \draw (A3.center) -- (B4.center);
                  \draw (A4.center) -- (B4.center);
                  \draw (A4.center) -- (B5.center);
                  \draw (A4.center) -- (B6.center);
                  
              \node (l1) at (3.3,-1) {\textcolor{blue}{1}};
                   \node (l2) at (4.1,-1) {\textcolor{blue}{1}};
                   \node (l3) at (5.5,-1) {\textcolor{blue}{-1}};
                   \draw[blue] (0,-1)--(l1)--(l2)--(l3)--(15,-1);
                   
                   \node (m1) at (8.4,-2) {\textcolor{blue}{-2}};
                   \node (m2) at (12.6,-2) {\textcolor{blue}{2}};
                   \draw[blue] (0,-2)--(m1)--(m2)--(15,-2);
                   \node (n1) at (2,-3) {\textcolor{blue}{3}};
                   \node (n2) at (9.37,-3) {\textcolor{blue}{-3}};
                   \node (n3) at (10.7,-3) {\textcolor{blue}{3}};
                   \draw[blue] (0,-3)--(n1)--(n2)--(n3)--(15,-3);
                   \node at (-1,-1) {\textcolor{blue}{1st level}};
                    \node at (-1,-2) {\textcolor{blue}{2nd level}};
                     \node at (-1,-3) {\textcolor{blue}{3rd level}};
\end{tikzpicture}
  \caption{An example of string diagram.}\label{fig:1}
\end{figure}

\end{example}

In a string diagram, we always label the closed strings on the $k$-th level from left to right by $\binom{k}{1},\binom{k}{2},\binom{k}{3},\cdots.$
Denote by $I$ the set of closed strings in a string diagram. We always view $I$ as an ordered set with the order: ${k_1\choose i}\preceq{k_2\choose j}$ if $k_1<k_2$ or $k_1=k_2$ and $i\leq j$. For example, in Figure \ref{fig:1} $$I=\{{1 \choose 1}\prec{1 \choose 2}\prec{2 \choose 1}\prec{3 \choose 1}\prec{3 \choose 2}\}.$$ 
This order will be used when we try to write down a matrix indexed by $I$.

To every string diagram $D(S_{\bf j}^{\bf i}(A), T)$ (and hence to every pair $(S_{\bf j}^{\bf i}(A), T)$), we construct a skew-symmetrizable matrix $B=(b_{ij})_{i,j\in I}$, where $I$ is the set of closed strings of the string diagram.  The entries of $B$ are determined by $$B=\sum_{\text{nodes}\; m}B^{(m)},$$
where each $B^{(m)}=(b_{ij}^{(m)})$ is defined as follows:
Suppose that the node $m$ belongs to the $k$-th level and $x$ and $y$ are the strings in $k$-th level with the node $m$ an endpoint. Namely, we have $$\xymatrix{\ar@{-}[r]^x &m\ar@{-}[r]^y&}$$ 
Denote by 
$$\epsilon(m)=\begin{cases}1,&\text{if}\;m\;\text{is labeled by}\;k;\\
-1,&\text{if}\;m\;\text{is labeled by}\;-k.\end{cases}$$
\begin{itemize}
    \item If both $x$ and $y$ are closed strings, then we define $$b_{xy}^{(m)}=-b_{yx}^{(m)}=1\cdot\epsilon(m).$$
    \item Let $z$ be a closed string on the $j$-th level with $j\neq k$ such that $z$ intersects with the triangle containing the node $m$. If $x$ is a closed string, we set 
    $$b_{xz}^{(m)}=\frac{a_{kj}}{2}\cdot \epsilon(m) \;\;\;\text{and}\;\;\;b_{zx}^{(m)}=-\frac{a_{jk}}{2}\cdot \epsilon(m).$$
If  $y$ is a closed string, we set 
    $$b_{yz}^{(m)}=-\frac{a_{kj}}{2}\cdot \epsilon(m) \;\;\;\text{and}\;\;\;b_{zy}^{(m)}=\frac{a_{jk}}{2}\cdot \epsilon(m).$$    
\item The remain entries of $B^{(m)}$ are zero.
\end{itemize}

The matrix $B$ constructed above is called the {\em exchange matrix} of  $T$. 

Now we explain why $B$ is skew-symmetrizable (see also \cite{Shen-Weng-2021}*{Section 3.1}).
Let $S={\rm Diag}(s_1,\cdots,s_n)$ be an integer diagonal matrix with $s_i>0$ such that  $SA$ is symmetric. If $x$ is a closed string of $D(S_{\bf j}^{\bf i}(A), T)$ on the $k$-th level, we define $s_x^\prime:=s_k$. Then $S^\prime={\rm Diag}(s_x^\prime)_{x\in I}$ is a skew-symmetrizer of $B$, that is, $S^\prime B$ is skew-symmetric. Thus $B$ is skew-symmetrizable. Clearly, if $A$ is symmetric, then $B$ is skew-symmetric.

\begin{remark}
(i) By the construction of $B$, we know that if $x$ with endpoints $m_1,m_2$ and $z$ with endpoints $n_1,n_2$ are two closed strings on different levels,
then the $(x,z)$-entry $b_{xz}$ of $B$ is given by
$$b_{xz}=b_{xz}^{(m_1)}+b_{xz}^{(m_2)}+b_{xz}^{(n_1)}+b_{xz}^{(n_2)}.$$

 (ii) Although $B^{(m)}$ have entries with denominator $2$, when we sum up all nodes $m$, the resulting matrix $B$ is an integer matrix.  In fact, if $x$ and $y$ are on the same level, then $b_{xy}=-b_{yx}\in\{0,\pm 1\}$.  If $x$ is on the level $k$ and $z$ is on the level $j$ with $j\neq k$, then $b_{xz}\in\{0,\pm a_{kj}\}$. 
 
 (iii) We can use a {\em coloured quiver} to represent $B$. Because the absolute values of non-zero entries $b_{xz}$ of $B$ are determined by the levels data of $x$ and $z$, the only extra data we need to record are the signs of $b_{xz}$. We record the signs using two different types of arrows. 
If $x$ and $z$ are on the same level and $b_{xz}=-1$, then we draw an (horizontal) arrow of the form $\xymatrix{x\ar[r]&z}$. If $x$ is on the level $k$ and $z$ is on the level $j\neq k$ and $b_{xz}=a_{kj}<0$, then we draw an (inclined) arrow of the form $\xymatrix{x\ar@{~>}[r]&z}$.

(iv) The matrix $-B^{\rm T}$ corresponds to the principal part of the matrix constructed in \cite{Shen-Weng-2021}*{Section 3}, because we slightly changed the construction of the exchange matrix of a string diagram in \cite{Shen-Weng-2021}.
\end{remark}

\begin{example} Let $D(S_{\bf j}^{\bf i}(A), T)$ be the string diagram in Figure \ref{fig:1}. The set of closed strings is the following ordered set.
$$I=\{{1 \choose 1}\prec{1 \choose 2}\prec{2 \choose 1}\prec{3 \choose 1}\prec{3 \choose 2}\}.$$ 
We will take different Cartan matrices and look at the coloured quivers and the exchange matrices of $D(S_{\bf j}^{\bf i}(A), T)$.

(i) Take the Cartan matrices $A=\begin{bmatrix}2&-5&-7\\
-5&2&-9\\-7&-9&2\end{bmatrix}$ and $A^\prime=\begin{bmatrix}2&-6&-8\\
-3&2&-10\\-4&-10&2\end{bmatrix}$. Then the string diagrams $D(S_{\bf j}^{\bf i}(A), T)$ and $D(S_{\bf j}^{\bf i}(A^\prime), T)$ have the same coloured quiver. Their common coloured quiver is as follows:
$${\xymatrixrowsep{4mm}
\xymatrixcolsep{4mm}\xymatrix{
&{1 \choose 1}&{1 \choose 2}\ar[l]&&\\
&&&{2 \choose 1}\ar@{~>}[llld]&\\  
{3 \choose 1}\ar@{~>}[rruu]\ar[rrrr]&&&&{3 \choose 2}\ar@{~>}[lu]
}}$$
However, $D(S_{\bf j}^{\bf i}(A), T)$ and $D(S_{\bf j}^{\bf i}(A^\prime), T)$ has different exchange matrices, which are given as follows:

$$B=\begin{bmatrix}0&1&0&0&0\\ -1&0&0&7&0\\ 0&0&0&-9&9\\
0&-7&9&0&-1\\ 0&0&-9&1&0\end{bmatrix}\;\;\;\text{and}\;\;\;
B^\prime=\begin{bmatrix}0&1&0&0&0\\ -1&0&0&8&0\\ 0&0&0&-10&10\\
0&-4&10&0&-1\\ 0&0&-10&1&0\end{bmatrix}.$$

(ii) Take the Cartan matrix $A=\begin{bmatrix}2&-6&-8\\
-3&2&0\\-4&0&2\end{bmatrix}$. Then the coloured quiver and the exchange matrix of $D(S_{\bf j}^{\bf i}(A), T)$ are given as follows:

\[\begin{array}{llll}
\begin{array}{llll}
 {\xymatrixrowsep{4mm}
\xymatrixcolsep{4mm}  \xymatrix{
&{1 \choose 1}&{1 \choose 2}\ar[l]&&\\
&&&{2 \choose 1}&\\  
{3 \choose 1}\ar@{~>}[rruu]\ar[rrrr]&&&&{3 \choose 2}
} }\end{array}&
\begin{array}{llll}
B=\begin{bmatrix}0&1&0&0&0\\ -1&0&0&8&0\\ 0&0&0&0&0\\
0&-4&0&0&-1\\ 0&0&0&1&0\end{bmatrix}\end{array}\end{array}\]

\end{example}

\begin{example}\label{ex:bfz}
Take the Cartan matrix $A=\begin{bmatrix}2&-1\\
-1&2\end{bmatrix}$ and consider the following string diagram $D(S_{\bf j}^{\bf i}(A), T)$.
$$\begin{tikzpicture}[scale=0.70]
  \node (A1) at (3,0) {$\bullet$};
    \node (A2) at (6,0) {$\bullet$};
       \node (A3) at (9,0) {$\bullet$};
          \node (A4) at (12,0) {$\bullet$};

\draw (A1.center) --node [above] {$1$} (A2.center);
\draw (A2.center) --node [above] {$2$} (A3.center);
\draw(A3.center) --node [above] {$1$} (A4.center);

\node (B1) at (3,-4) {$\bullet$};
    \node (B2) at (6,-4) {$\bullet$};
       \node (B3) at (9,-4) {$\bullet$};
          \node (B4) at (12,-4) {$\bullet$};

                  \draw (B1.center) --node [below] {$1$} (B2.center);
                  \draw (B2.center) --node [below] {$2$} (B3.center);
                  \draw (B3.center) --node [below] {$1$} (B4.center);

                  \draw (A1.center) -- (B1.center);
                  \draw (A1.center) -- (B2.center);
                  \draw (A1.center) -- (B3.center);
                  \draw (A1.center) -- (B4.center);
                  \draw (A2.center) -- (B4.center);
                  \draw (A3.center) -- (B4.center);
                  \draw (A4.center) -- (B4.center);

              \node (l1) at (3.5,-1.5) {\textcolor{blue}{1}};
                   \node (l2) at (5.8,-1.5) {\textcolor{blue}{1}};
                   \node (l3) at (7.5,-1.5) {\textcolor{blue}{-1}};
                    \node (l4) at (11,-1.5) {\textcolor{blue}{-1}};
                   \draw[blue] (2.5,-1.5)--(l1)--(l2)--(l3)--(l4)--(12.5,-1.5);
                   
                   \node (m1) at (6,-2.5) {\textcolor{blue}{2}};
                   \node (m2) at (10.3,-2.5) {\textcolor{blue}{-2}};
                   \draw[blue] (2.5,-2.5)--(m1)--(m2)--(12.5,-2.5);

                   \node at (1.3,-1.5) {\textcolor{blue}{1st level}};
                    \node at (1.3,-2.5) {\textcolor{blue}{2nd level}};
                    
\end{tikzpicture}
$$

Then the coloured quiver and the exchange matrix of $D(S_{\bf j}^{\bf i}(A), T)$ are given as follows:

\[\begin{array}{llll}
\begin{array}{llll}
 {\xymatrixrowsep{6mm}
\xymatrixcolsep{6mm}  
 \xymatrix{
{1 \choose 1}\ar@{~>}[rd]&&{1 \choose 2}\ar[ll]\ar[r]&{1 \choose 3}\ar@{~>}[lld]\\
&{2 \choose 1}\ar@{~>}[ru]&&
}
 }\end{array}&
\begin{array}{llll}
B=\begin{bmatrix}0&1&0&-1\\-1&0&-1&1\\0&1&0&-1\\1&-1&1&0\end{bmatrix}\end{array}\end{array}\]

Under the new order $\{{1\choose 1}< {2\choose 1}< {1\choose 2}<{1\choose 3}\}$ of the vertex set $I=\{{1\choose 1}\prec{1\choose 2}\prec {1\choose 3}\prec {2\choose 1}\}$, the exchange matrix $B$ becomes 
$$B^\prime=\begin{bmatrix}0&-1&1&0\\1&0&-1&1\\-1&1&0&-1\\0&-1&1&0\end{bmatrix}.$$
One can see that $B^\prime$ is exactly the same with the exchange matrix in \cite{bfz_2005}*{Example 2.18}.
\end{example}

\begin{proposition}\label{pro:sw1}
\cite{Shen-Weng-2021}*{Proposition 3.7} Let $T$ be a triangulation of $S_{\bf j}^{\bf i}(A)$ and $T^\prime$ a triangulation of $S_{\bf j}^{\bf i}(A)$ obtained from $T$ by flip a diagonal inside a quadrilateral whose top edge and bottom edge are labelled by $i$ and $j$ respectively (see as follows).
$$\begin{tikzpicture}
  \node (n1) at (0,0) {$\bullet$};
  \node (n2) at (0,1) {$\bullet$};
  \node (n3) at (1,1) {$\bullet$};
  \node (n4) at (1,0) {$\bullet$};
  \draw (n1.center) -- (n2.center);
\draw (n2.center) --node [above] {$i$} (n3.center);
 \draw (n3.center) -- (n4.center);
\draw (n4.center) --node [below] {$j$} (n1.center);
\draw(n2.center) -- (n4.center);

 \node (v1) at (4,0) {$\bullet$};
  \node (v2) at (4,1) {$\bullet$};
  \node (v3) at (5,1) {$\bullet$};
  \node (v4) at (5,0) {$\bullet$};
  \draw (v1.center) -- (v2.center);
\draw (v2.center) --node [above] {$i$} (v3.center);
 \draw (v3.center) -- (v4.center);
\draw (v4.center) --node [below] {$j$} (v1.center);
\draw(v1.center) -- (v3.center);

\draw[stealth-stealth](2,0.5)--(3,0.5);

\node (l1) at (0.6,0.8) {\textcolor{blue}{-i}};
\node (l2) at (0.4,0.2) {\textcolor{blue}{j}};
\node (l3) at (4.4,0.8) {\textcolor{blue}{-i}};
\node (l4) at (4.6,0.2) {\textcolor{blue}{j}};

\draw[blue](-0.2,0.8)--(l1)--(1.2,0.8);
\draw[blue](-0.2,0.2)--(l2)--(1.2,0.2);
\draw[blue](3.8,0.8)--(l3)--(5.2,0.8);
\draw[blue](3.8,0.2)--(l4)--(5.2,0.2);

\end{tikzpicture}
$$

\begin{itemize}
    \item [(i)] If $i\neq j$, then the exchange matrices of $T$ and $T^\prime$ are the same;
    \item[(ii)] If $i=j$, then the exchange matrix of $T$ and that of $T^\prime$ are obtained from each other by mutating the vertex corresponding to the closed string on the $i$-th level with endpoints $j=i$ and $-i$ inside the quadrilateral.
\end{itemize}
\end{proposition}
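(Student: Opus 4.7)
The plan is a local computation at the quadrilateral $Q$ where the flip takes place. Flipping the diagonal changes only the triangulation of $Q$, so the only nodes whose positions can change are the (at most) two nodes inside $Q$: the one on level $i$ labelled $-i$ (from the triangle with top edge $i$) and the one on level $j$ labelled $j$ (from the triangle with bottom edge $j$). All other nodes and their triangles are untouched. To compare the exchange matrices, identify the closed strings of $T$ and $T'$ by matching their external endpoints outside $Q$ and preserving left-to-right order on each level. Under this identification, entries of $B$ receiving no contribution from the inside nodes agree automatically, and the proof reduces to comparing inside-node contributions.

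For case (i) ($i\neq j$): let $\alpha,\beta$ be the closed strings on level $i$ adjacent to $-i$ (left and right), and $\gamma,\delta$ those on level $j$ adjacent to $j$. For any closed string on a level $k\neq i,j$, there are no nodes on level $k$ inside $Q$, so it traverses $Q$ entirely and intersects the triangle containing $-i$ (resp.\ $j$) in both triangulations; contributions to entries between such a string and $\alpha,\beta,\gamma,\delta$ are therefore unchanged. The only entries in question are those between $\{\alpha,\beta\}$ and $\{\gamma,\delta\}$: in $T$ the $-i$-triangle (upper right) is intersected by $\delta$ but not $\gamma$, and the $j$-triangle (lower left) by $\alpha$ but not $\beta$, while in $T'$ these roles swap. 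Although each individual node's contribution changes, summing the contributions of $-i$ and $j$ yields the same entries in $T$ and $T'$, because the two triangles in each triangulation jointly cover $Q$. Hence $B^T=B^{T'}$.

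For case (ii) ($i=j$): both inside nodes lie on level $i$; in addition to $\alpha,\beta$ there is now a new closed string $\pi$ joining them inside $Q$, with $\alpha$ on the left and $\beta$ on the right. In $T$ the left node flanking $\pi$ is $i$ and the right is $-i$; in $T'$ they swap. To prove $B^{T'}=\mu_\pi(B^T)$, note first that entries involving $\pi$ receive contributions only from the two inside nodes (since $\pi$ is contained in $Q$). Direct computation gives $b_{\alpha,\pi}^T=1$, $b_{\pi,\beta}^T=-1$, and $b_{\pi,\eta}^T=-a_{ik}$, $b_{\eta,\pi}^T=a_{ki}$ for any closed string $\eta$ on level $k\neq i$ passing through $Q$. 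After the flip the $\epsilon$-signs of the two nodes flanking $\pi$ negate, so all $\pi$-entries flip sign, matching the mutation rule. For off-$\pi$ entries between $\{\alpha,\beta\}$ and a cross-level $\eta$, only the adjacent inside node contributes: $b_{\alpha,\eta}^T=a_{ik}/2$ (from node $i$ in $T$) and $b_{\alpha,\eta}^{T'}=-a_{ik}/2$ (from node $-i$ in $T'$). The difference $-a_{ik}$ exactly matches the mutation correction ${\rm sgn}(b_{\alpha,\pi}){\rm max}\{b_{\alpha,\pi}b_{\pi,\eta},0\}={\rm max}\{-a_{ik},0\}=-a_{ik}$, using the Cartan inequality $a_{ik}\leq 0$ for $k\neq i$. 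Analogous checks handle $\beta,\eta$ and their transposes, and $b_{\alpha,\beta}$ is fixed both by the flip (no inside contribution) and by mutation at $\pi$ (since $b_{\alpha,\pi}b_{\pi,\beta}=-1<0$).

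The main obstacle is the cross-level computation in case (ii): one must track the half-integer contributions from the individual $B^{(m)}$'s carefully, get the signs right in the mutation formula, and invoke the Cartan sign condition $a_{ik}\leq 0$ at the right moment to evaluate the maximum. Minor side cases --- open strings (which do not index vertices of $B$), vanishing Cartan entries $a_{ik}=0$, and the observation that no closed string on a level $\neq i,j$ can end inside $Q$ (since no nodes on such levels live inside $Q$) --- are handled uniformly and pose no real difficulty.
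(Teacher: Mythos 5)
The paper does not prove this statement; it is imported verbatim from Shen--Weng (Proposition 3.7 there), so there is no in-paper argument to compare against. Your direct local verification is correct and is the natural proof: the flip only relocates the two nodes $-i$ and $j$ inside the quadrilateral, the string sets are canonically identified, and the sign computations (including the negation of all $\pi$-entries in case (ii) and the use of $a_{ik}\le 0$ to evaluate the mutation correction) all check out. Two small points worth making explicit: in case (i) the ``the two triangles jointly cover $Q$'' remark should be replaced by the actual four-entry check on $\{\alpha,\beta\}\times\{\gamma,\delta\}$ (which you essentially set up), and in case (ii) you should also record that for two cross-level strings $\eta,\eta'$ the mutation at $\pi$ contributes nothing because $b_{\eta\pi}b_{\pi\eta'}=a_{ki}\cdot(-a_{ik'})\le 0$, matching the fact that the flip does not touch $b_{\eta\eta'}$.
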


\begin{proposition}\label{pro:sw2}
\cite{Shen-Weng-2021}*{Proposition 3.3}
 Let $T, T^\prime$ be two triangulations of $S_{\bf j}^{\bf i}(A)$ and $B, B^\prime$ their exchange matrices. Then 
 \begin{itemize}
     \item [(i)] $T$ and $T^\prime$ can be obtained from each other by a sequence of flips;
     \item[(ii)] $B$ and $B^\prime$ can be obtained from each other by a sequence of mutations.
 \end{itemize}
\end{proposition}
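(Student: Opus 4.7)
The plan is to prove part (i) first, since part (ii) follows immediately from (i) combined with Proposition \ref{pro:sw1}. For (i), I would use the bijection, noted in the remark preceding Example \ref{ex:tri}, between triangulations of $S_{\bf j}^{\bf i}(A)$ and shuffles of the pair $({\bf i},{\bf j})$. Reading the triangles of a triangulation $T$ from left to right produces a word: each ``down-pointing'' triangle (apex on the bottom base) contributes its top-edge label from ${\bf i}$, and each ``up-pointing'' triangle contributes its bottom-edge label from ${\bf j}$. The resulting word preserves the internal order of both ${\bf i}$ and ${\bf j}$, hence is a shuffle, and the assignment is a bijection.

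Under this identification, a flip inside a quadrilateral with top edge labeled $i\in{\bf i}$ and bottom edge labeled $j\in{\bf j}$ exchanges the two pairs of triangles cutting that quadrilateral. In the associated shuffle, this has the effect of swapping two adjacent letters, one contributed by ${\bf i}$ and one contributed by ${\bf j}$, while leaving every other letter in place. Conversely, any adjacent transposition of one letter from ${\bf i}$ with one letter from ${\bf j}$ is realized by such a flip. Since any two shuffles of the same pair $({\bf i},{\bf j})$ can be connected by a sequence of such adjacent swaps (a standard bubble-sort argument: sort the positions contributed by ${\bf i}$ into the target positions one at a time, each move being an allowed adjacent transposition), any two triangulations $T$ and $T'$ can be connected by a sequence of flips. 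This proves (i).

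For (ii), apply (i) to get a chain $T=T_0,T_1,\ldots,T_N=T'$ of triangulations of $S_{\bf j}^{\bf i}(A)$ such that $T_{k}$ and $T_{k+1}$ differ by a single flip. Let $B_k$ be the exchange matrix of $T_k$, so $B_0=B$ and $B_N=B'$. By Proposition \ref{pro:sw1}, for each $k$ either the top- and bottom-edge labels of the flipped quadrilateral differ, in which case $B_k=B_{k+1}$, or they coincide (both equal to some $i$), in which case $B_{k+1}=\mu_x(B_k)$ where $x$ is the closed string on the $i$-th level bounded by the two nodes inside the quadrilateral. Composing these steps (and dropping the trivial ones) exhibits $B'$ as obtained from $B$ by a sequence of mutations.

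The only step with any subtlety is the precise matching between flips in the trapezoid and adjacent transpositions in the shuffle, together with the bubble-sort argument that the shuffles of a fixed pair form a single connected component under such transpositions; but both are routine once the triangulation/shuffle dictionary is written down carefully. Everything else is a direct appeal to Proposition \ref{pro:sw1}.
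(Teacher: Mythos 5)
The paper offers no proof of this proposition; it is imported verbatim from Shen--Weng (their Proposition 3.3), so there is no in-text argument to compare against. Your proof is correct and is the standard one: under the triangulation/shuffle dictionary a flip is exactly an adjacent transposition of an ${\bf i}$-letter with a ${\bf j}$-letter, shuffles of a fixed pair are connected under such transpositions by bubble sort (equivalently, the shuffle is determined by the subset of positions occupied by ${\bf i}$-letters, and adjacent swaps connect all such subsets), and part (ii) then follows by applying Proposition \ref{pro:sw1} to each flip in the chain and discarding the identity steps. The only point you might make explicit is that a diagonal separating two triangles of the same type (both top-labelled or both bottom-labelled) bounds a degenerate quadrilateral whose flipped diagonal would lie along a base, so the flips you use are precisely the admissible ones; this does not affect the validity of the argument, since you only need the existence of the mixed-type flips.
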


\subsection{Examples for the case \texorpdfstring{$({\bf i},{\bf j})=(\emptyset,{\bf j})$}{Lg}}
In this subsection, we focus on the case $({\bf i},{\bf j})=(\emptyset,{\bf j})$. In this case, the trapezoid $S_{\bf j}^\emptyset(A)$ is in fact a triangle and there is only one available triangulation $T_{\bf j}$ in this case, which is as follows:
$$
\begin{tikzpicture}[scale=0.60]
  
       \node (A) at (9,0) {$\bullet$};
        
\node (B1) at (0,-4) {$\bullet$};
    \node (B2) at (3,-4) {$\bullet$};
       \node (B3) at (6,-4) {$\bullet$};
          \node (B4) at (9,-4) {$\bullet$};
              \node (B5) at (12,-4) {$\bullet$};
              \node(B6) at (15,-4) {$\dots$};
                  \node (B7) at (18,-4) {$\bullet$};
                  
                  \draw (B1.center) --node [below] {$j_1$} (B2.center);
                  \draw (B2.center) --node [below] {$j_2$} (B3.center);
                  \draw (B3.center) --node [below] {$j_3$} (B4.center);
                  \draw (B4.center) --node [below] {$j_4$} (B5.center);
                  \draw (B5.center) --node [below] {$j_5$} (B6);
                  \draw (B6) --node [below] {$j_{\ell({\bf j})}$} (B7.center);
                  \draw (A.center) -- (B1.center);
                  \draw (A.center) -- (B2.center);
                  \draw (A.center) -- (B3.center);
                  \draw (A.center) -- (B4.center);
                  \draw (A.center) -- (B5.center);
                  \draw (A.center) -- (B7.center);
\end{tikzpicture}
$$

Thus for each $[1,n]$-sequence ${\bf j}$, there is an exchange matrix $B_{\bf j}$ associated with $S_{\bf j}^\emptyset(A)$. If $A$ is symmetric, then $B_{\bf j}$ is skew-symmetric. In this case, we can represent $B_{\bf j}$ using its usual quiver $Q_{\bf j}$. One can see that the quiver $Q_{\bf j}$ corresponds to the principal part of the ice quivers appearing  in \cites{buan_iyama_reiten_scott_2009,gls_2011}.

\begin{example}\label{ex:gls}
Take the Cartan matrix $A=\begin{bmatrix} 2&-3&-2\\-3&2&-2\\-2&-2&2
\end{bmatrix}$ and ${\bf j}=(j_1,\cdots,j_{10})=(1,2,1,3,1,2,1,2,3,2)$.
Then the triangulation $T_{\bf j}$ and the string diagram of $S_{\bf j}^{\emptyset}(A)$ are as follows:
$$
\begin{tikzpicture}[scale=0.60]
  
       \node (a) at (0,0) {$\bullet$};
       \node (b1) at (-10,-5) {$\bullet$};
         \node (b2) at (-8,-5) {$\bullet$};
           \node (b3) at (-6,-5) {$\bullet$};
             \node (b4) at (-4,-5) {$\bullet$};
               \node (b5) at (-2,-5) {$\bullet$};
                 \node (b6) at (0,-5) {$\bullet$};
                  \node (b7) at (2,-5) {$\bullet$};
                  \node (b8) at (4,-5) {$\bullet$};
           \node (b9) at (6,-5) {$\bullet$};
             \node (b10) at (8,-5) {$\bullet$};
               \node (b11) at (10,-5) {$\bullet$};
\draw (b1.center) --node [below] {$1$} (b2.center);    
\draw (b2.center) --node [below] {$2$} (b3.center);  
\draw (b3.center) --node [below] {$1$} (b4.center);    
\draw (b4.center) --node [below] {$3$} (b5.center); 
\draw (b5.center) --node [below] {$1$} (b6.center);
\draw (b6.center) --node [below] {$2$} (b7.center);
\draw (b7.center) --node [below] {$1$} (b8.center);
\draw (b8.center) --node [below] {$2$} (b9.center);
\draw (b9.center) --node [below] {$3$} (b10.center);
\draw (b10.center) --node [below] {$2$} (b11.center);
               \draw (a.center) -- (b1.center);
                 \draw (a.center) -- (b2.center);
                   \draw (a.center) -- (b3.center);
                     \draw (a.center) -- (b4.center);
                       \draw (a.center) -- (b5.center);
                         \draw (a.center) -- (b6.center);
                           \draw (a.center) -- (b7.center);
                             \draw (a.center) -- (b8.center);
                               \draw (a.center) -- (b9.center);
                                 \draw (a.center) -- (b10.center);
                                  \draw (a.center) -- (b11.center);
            
              \node (l1) at (-4.7,-2.5) {\textcolor{blue}{1}};
              \node (l2) at (-2.5,-2.5) {\textcolor{blue}{1}};
                \node (l3) at (-0.5,-2.5) {\textcolor{blue}{1}};
               \node (l4) at (1.5,-2.5) {\textcolor{blue}{1}};
                  \draw[blue] (l1)--(l2)--(l3)--(l4);
                  
                  \node (m1) at (-4.9,-3.5) {\textcolor{blue}{2}};
              \node (m2) at (0.7,-3.5) {\textcolor{blue}{2}};
               \node (m3) at (3.5,-3.5) {\textcolor{blue}{2}};
                \node (m4) at (6.2,-3.5) {\textcolor{blue}{2}};
                \draw[blue] (m1)--(m2)--(m3)--(m4);
                
                \node (n1) at (-2.6,-4.5) {\textcolor{blue}{3}};
              \node (n2) at (6.2,-4.5) {\textcolor{blue}{3}};
              \draw[blue] (n1)--(n2);

                   \node at (-10,-2.5) {\textcolor{blue}{1st level}};
                    \node at (-10,-3.5) {\textcolor{blue}{2nd level}};                 \node at (-10,-4.5) {\textcolor{blue}{3rd level}};
        
\end{tikzpicture}
$$

Because the Cartan matrix $A$ here is symmetric, we know that the exchange matrix $B_{\bf j}$ of the above triangulation is skew-symmetric. The usual quiver 
 $Q_{\bf j}$ of $B_{\bf j}$ is as follows:
 \[\xymatrixrowsep{5mm}
\xymatrixcolsep{5mm}
\xymatrix{{1 \choose 1}\ar[rd]|3 &&{1 \choose 2}\ar[ll]\ar@/_1pc/[rdd]|2 & &{1 \choose 3}\ar[ll]\ar[rd]|3&& \\
 &{2 \choose 1}\ar[rrru]|3\ar[rrd]|2&& &&{2 \choose 2}\ar[llll]&{2 \choose 3}\ar[l] \\
  &&&{3 \choose 1}\ar[rrru]|2 &&&}
\]
where $\xymatrix{j\ar[r]|q&i}$ means that there are $q$ arrows from $j$ to $i$. (The readers can compare this example
with the one in \cite{gls_2011}*{Section 13.2, Page 405}.)
\end{example}

\begin{example}
Take the Cartan matrix $A=\begin{bmatrix}2&-6&-8\\
-3&2&-10\\-4&-10&2\end{bmatrix}$ and ${\bf j}=(j_1,\cdots,j_7)=(1,2,1,3,1,3,2)$.  Then the triangulation $T_{\bf j}$ and the string diagram of $S_{\bf j}^{\emptyset}(A)$ are as follows:
$$
\begin{tikzpicture}[scale=0.60]
  
       \node (a) at (0,0) {$\bullet$};
       \node (b1) at (-7,-5) {$\bullet$};
         \node (b2) at (-5,-5) {$\bullet$};
           \node (b3) at (-3,-5) {$\bullet$};
             \node (b4) at (-1,-5) {$\bullet$};
               \node (b5) at (1,-5) {$\bullet$};
                 \node (b6) at (3,-5) {$\bullet$};
                  \node (b7) at (5,-5) {$\bullet$};
                  \node (b8) at (7,-5) {$\bullet$};
           
\draw (b1.center) --node [below] {$1$} (b2.center);    
\draw (b2.center) --node [below] {$2$} (b3.center);  
\draw (b3.center) --node [below] {$1$} (b4.center);    
\draw (b4.center) --node [below] {$3$} (b5.center); 
\draw (b5.center) --node [below] {$1$} (b6.center);
\draw (b6.center) --node [below] {$3$} (b7.center);
\draw (b7.center) --node [below] {$2$} (b8.center);
               \draw (a.center) -- (b1.center);
                 \draw (a.center) -- (b2.center);
                   \draw (a.center) -- (b3.center);
                     \draw (a.center) -- (b4.center);
                       \draw (a.center) -- (b5.center);
                         \draw (a.center) -- (b6.center);
                           \draw (a.center) -- (b7.center);
                             \draw (a.center) -- (b8.center);

              \node (l1) at (-3,-2.5) {\textcolor{blue}{1}};
              \node (l2) at (-1,-2.5) {\textcolor{blue}{1}};
                \node (l3) at (1,-2.5) {\textcolor{blue}{1}};
               {\textcolor{blue}{1}};
                  \draw[blue] (l1)--(l2)--(l3);
                  
                  \node (m1) at (-2.8,-3.5) {\textcolor{blue}{2}};
              \node (m2) at (4.3,-3.5) {\textcolor{blue}{2}};
            
                \draw[blue] (m1)--(m2);
                
                \node (n1) at (0,-4.5) {\textcolor{blue}{3}};
              \node (n2) at (3.5,-4.5) {\textcolor{blue}{3}};
              \draw[blue] (n1)--(n2);

                   \node at (-10,-2.5) {\textcolor{blue}{1st level}};
                    \node at (-10,-3.5) {\textcolor{blue}{2nd level}};                 \node at (-10,-4.5) {\textcolor{blue}{3rd level}};
        
\end{tikzpicture}
$$

Then the coloured quiver and the exchange matrix of the above triangulation are given as follows:

\[\begin{array}{llll}
\begin{array}{llll}
 {\xymatrixrowsep{4mm}
\xymatrixcolsep{4mm}  \xymatrix{
{1 \choose 1}\ar@{~>}[rd]&&{1 \choose 2}\ar[ll]\ar@{~>}[rdd]&\\
&{2 \choose 1}&&\\  
&&&{3 \choose 1}
}  }\end{array}&
\begin{array}{llll}
B_{\bf j}=\begin{bmatrix}0&1&-6&0\\ -1&0&0&-8\\ 3&0&0&0\\0&4&0&0\end{bmatrix}\end{array}\end{array}\]

\end{example}

\section{Exchange matrices from string diagrams are in the class in \texorpdfstring{$\mathcal P'$}{Lg}}
\label{sec4}

\subsection{Reduce \texorpdfstring{$({\bf i},{\bf j})$}{Lg} to \texorpdfstring{$(\emptyset,{\bf i}^{-1}\circ{\bf j})$}{Lg}}

Let $S_{\bf j}^{\bf i}(A)$ be the trapezoid associated with a pair $({\bf i},{\bf j})$ of $[1,n]$-sequences, say 
$${\bf i}=(i_1,\dots,i_{\ell(\bf i)})\;\;\;\text{and}\;\;\;{\bf j}=(j_1,\dots,j_{\ell(\bf j)}).$$ We define

\begin{eqnarray}
L_d(S_{\bf j}^{\bf i}(A))&:=&S_{{\bf j}^\prime}^{{\bf i}^\prime}(A),\;\text{where}\;\; {\bf i}^\prime=(j_1,i_1,\cdots,i_{\ell({\bf i})})\;\;\text{and}\;\;{\bf j}^\prime=(j_2,\dots,j_{\ell({\bf j})});  \nonumber\\
L_u(S_{\bf j}^{\bf i}(A))&:=&S_{{\bf j}^\prime}^{{\bf i}^\prime}(A),\;\text{where}\;\; {\bf i}^\prime=(i_2,\cdots,i_{\ell({\bf i})})\;\;\text{and}\;\;{\bf j}^\prime=(i_1,j_1,\dots,j_{\ell({\bf j})});  \nonumber\\
R_d(S_{\bf j}^{\bf i}(A))&:=& S_{{\bf j}^\prime}^{{\bf i}^\prime}(A),\;\text{where}\;\; {\bf i}^\prime=(i_1,\cdots,i_{\ell({\bf i})},j_{\ell({\bf j})})\;\;\text{and}\;\;{\bf j}^\prime=(j_1,\dots,j_{\ell({\bf j})-1}); \nonumber\\
R_u(S_{\bf j}^{\bf i}(A))&:=&S_{{\bf j}^\prime}^{{\bf i}^\prime}(A),\;\text{where}\;\; {\bf i}^\prime=(i_1,\cdots,i_{\ell({\bf i})-1})\;\;\text{and}\;\;{\bf j}^\prime=(j_1,\dots,j_{\ell({\bf j})},i_{\ell({\bf i})}). \nonumber
\end{eqnarray}

We know that each triangulation $T$ of $S_{\bf j}^{\bf i}(A)$ cuts the trapezoid $S_{\bf j}^{\bf i}(A)$ into $\ell({\bf i})+\ell({\bf j})$ small triangles. Each  triangle of $T$ is labeled by an integer in ${\bf i}$ or ${\bf j}$ and it looks like
$$
\begin{tikzpicture}[scale=0.60]
\node (v1) at (0,0) {$\bullet$};
\node (v2) at (-1,-1) {$\bullet$};
\node (v3) at (1,-1) {$\bullet$};
\draw (v1.center)--(v2.center)-- node[below]{$j$}(v3.center)--(v1.center);
\node at (2,-0.5) {or};
\node (l1) at (4,-1) {$\bullet$};
\node (l2) at (3,0) {$\bullet$};
\node (l3) at (5,0) {$\bullet$};
\draw (l1.center)--(l2.center)-- node[above]{$i$}(l3.center)--(l1.center);
\end{tikzpicture}
$$
Clearly, the leftmost triangle of $T$ is either labeled by the first integer in ${\bf i}$ or labeled by the first integer in ${\bf j}$.

If $T$ is a triangulation of $S_{\bf j}^{\bf i}(A)$ such that the leftmost  triangle of $T$ is labeled by the first integer in ${\bf j}$, then we define $$L_d(S_{\bf j}^{\bf i}(A), T):=(L_d(S_{\bf j}^{\bf i}(A)), T^\prime),$$ where $T^\prime$ is obtained from $T$ by the following replacement for the leftmost triangle of $T$.
$$
\begin{tikzpicture}[scale=0.60]
\node (v1) at (0,0) {$\bullet$};
\node (v2) at (-1,-1) {$\bullet$};
\node (v3) at (1,-1) {$\bullet$};
\draw (v1.center)--(v2.center)-- node[below]{$j_1$}(v3.center)--(v1.center);
\draw[-stealth](2,-0.5)--(3,-0.5);
\node (l1) at (5,-1) {$\bullet$};
\node (l2) at (4,0) {$\bullet$};
\node (l3) at (6,0) {$\bullet$};
\draw (l1.center)--(l2.center)-- node[above]{$j_1$}(l3.center)--(l1.center);
\end{tikzpicture}$$

If $T$ is a triangulation of $S_{\bf j}^{\bf i}(A)$ such that the leftmost  triangle of $T$ is labeled by the first integer in ${\bf i}$, then we define  $$L_u(S_{\bf j}^{\bf i}(A), T):=(L_u(S_{\bf j}^{\bf i}(A)), T^\prime),$$ where $T^\prime$ is obtained from $T$ by the following replacement for the leftmost triangle of $T$.
$$   \begin{tikzpicture}[scale=0.60]
\draw[-stealth](2,-0.5)--(3,-0.5);
\node (l1) at (0,-1) {$\bullet$};
\node (l2) at (-1,0) {$\bullet$};
\node (l3) at (1,0) {$\bullet$};
\draw (l1.center)--(l2.center)-- node[above]{$i_1$}(l3.center)--(l1.center);
\node (v1) at (5,0) {$\bullet$};
\node (v2) at (4,-1) {$\bullet$};
\node (v3) at (6,-1) {$\bullet$};
\draw (v1.center)--(v2.center)-- node[below]{$i_1$}(v3.center)--(v1.center);
\end{tikzpicture}$$

One can similarly define $R_d(S_{\bf j}^{\bf i}(A), T)$ for $T$ whose rightmost triangle is labeled by the last integer in ${\bf j}$ and  $R_u(S_{\bf j}^{\bf i}(A), T)$ for $T$ whose rightmost triangle is labeled by the last integer in ${\bf i}$.

We remark that the definitions of $L_d, L_u, R_d, R_u$ follow the ideas of the definitions of reflections in \cite{Shen-Weng-2021}*{Section 2.3}.

\begin{example}
Take $(S_{\bf j}^{\bf i}(A), T)$ as the one in Example \ref{ex:tri}, then
$L_d(S_{\bf j}^{\bf i}(A), T)$ and its string diagram are given in Figure \ref{fig:2}. If we further apply $L_u$ to $L_d(S_{\bf j}^{\bf i}(A), T)$, then we go back to $(S_{\bf j}^{\bf i}(A),T)$.

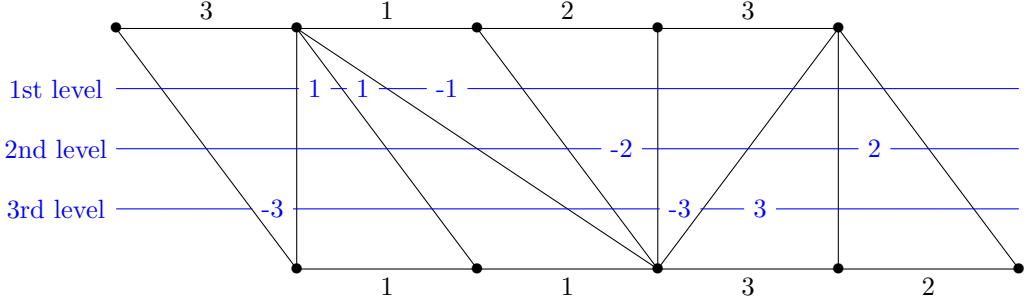
\begin{figure}
  \centering
\begin{tikzpicture}[scale=0.80]
  \node (A1) at (3,0) {$\bullet$};
    \node (A2) at (6,0) {$\bullet$};
       \node (A3) at (9,0) {$\bullet$};
          \node (A4) at (12,0) {$\bullet$};
          \node (B1) at (0,0) {$\bullet$};

\draw (A1.center) --node [above] {$1$} (A2.center);
\draw (A2.center) --node [above] {$2$} (A3.center);
\draw(A3.center) --node [above] {$3$} (A4.center);
  \draw (B1.center) --node [above] {$3$} (A1.center);

    \node (B2) at (3,-4) {$\bullet$};
       \node (B3) at (6,-4) {$\bullet$};
          \node (B4) at (9,-4) {$\bullet$};
              \node (B5) at (12,-4) {$\bullet$};
                  \node (B6) at (15,-4) {$\bullet$};

                  \draw (B2.center) --node [below] {$1$} (B3.center);
                  \draw (B3.center) --node [below] {$1$} (B4.center);
                  \draw (B4.center) --node [below] {$3$} (B5.center);
                  \draw (B5.center) --node [below] {$2$} (B6.center);
                  
                  \draw (B1.center) -- (B2.center);
                  \draw (A1.center) -- (B2.center);
                  \draw (A1.center) -- (B3.center);
                  \draw (A1.center) -- (B4.center);
                  \draw (A2.center) -- (B4.center);
                  \draw (A3.center) -- (B4.center);
                  \draw (A4.center) -- (B4.center);
                  \draw (A4.center) -- (B5.center);
                  \draw (A4.center) -- (B6.center);
                  
              \node (l1) at (3.3,-1) {\textcolor{blue}{1}};
                   \node (l2) at (4.1,-1) {\textcolor{blue}{1}};
                   \node (l3) at (5.5,-1) {\textcolor{blue}{-1}};
                   \draw[blue] (0,-1)--(l1)--(l2)--(l3)--(15,-1);
                   
                   \node (m1) at (8.4,-2) {\textcolor{blue}{-2}};
                   \node (m2) at (12.6,-2) {\textcolor{blue}{2}};
                   \draw[blue] (0,-2)--(m1)--(m2)--(15,-2);
                   \node (n1) at (2.6,-3) {\textcolor{blue}{-3}};
                   \node (n2) at (9.37,-3) {\textcolor{blue}{-3}};
                   \node (n3) at (10.7,-3) {\textcolor{blue}{3}};
                   \draw[blue] (0,-3)--(n1)--(n2)--(n3)--(15,-3);
                   \node at (-1,-1) {\textcolor{blue}{1st level}};
                    \node at (-1,-2) {\textcolor{blue}{2nd level}};
                     \node at (-1,-3) {\textcolor{blue}{3rd level}};
\end{tikzpicture}
 \caption{After applying $L_d$ to $(S_{\bf j}^{\bf i}(A), T)$.}\label{fig:2}
 \end{figure}

\end{example}

\begin{lemma}\label{pro:move} Given an $N\in\{L_u,L_d,R_u,R_d\}$. If $N(S_{\bf j}^{\bf i}(A), T)$ is defined for $T$,  then the exchange matrices of $(S_{\bf j}^{\bf i}(A), T)$ and  $N(S_{\bf j}^{\bf i}(A), T)$ are the same.
\end{lemma}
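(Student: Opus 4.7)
The plan is to show that each operation $N \in \{L_d, L_u, R_d, R_u\}$ modifies the string diagram in a very mild way — flipping the sign of a single node — and that the contribution of this node to the exchange matrix is zero both before and after, while no other node's contribution changes.

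I would treat $L_d$ as the main case; the other three are analogous. Writing $L_d(S_{\bf j}^{\bf i}(A), T) = (S_{{\bf j}'}^{{\bf i}'}(A), T')$, I observe that the triangles of $T$ and $T'$ are in bijection: every triangle but the leftmost one is unchanged, and the leftmost triangle keeps its special-edge label $j_1$ but has this edge moved from the bottom to the top. Passing to the string diagrams, the nodes are therefore in bijection, each on the same level and in the same linear position on that level. The only change is that the leftmost node $m_0$, which sits on level $j_1$, has its label flipped from $+j_1$ to $-j_1$, so $\epsilon(m_0)$ changes sign. In particular the sets of open and closed strings, as well as the indexing set $I$, coincide for the two diagrams.

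The crux is to verify that $B^{(m_0)} = 0$ in both diagrams. Let $\Delta_{m_0}$ denote the leftmost triangle; it has the left boundary edge of the trapezoid as one of its sides. Consequently, for every level $j$, the portion of the $j$-th horizontal line that lies inside $\Delta_{m_0}$ belongs to the leftmost string on level $j$, which starts at the left boundary (not at a node) and is therefore open. Applied to $j = j_1$, this shows that the string $x$ to the left of $m_0$ on level $j_1$ is open, so the $b_{xy}^{(m_0)}$ term contributes nothing. Applied to $j \neq j_1$, it shows that no closed string on any other level intersects $\Delta_{m_0}$, so all $z$-contributions also vanish. Hence $B^{(m_0)} = 0$ regardless of the sign of $\epsilon(m_0)$.

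For every other node $m \neq m_0$, the data determining $B^{(m)}$ — the level $k$, the sign $\epsilon(m)$, the identities of the two strings on level $k$ adjacent to $m$, and the list of closed strings on other levels intersecting $\Delta_m$ — is read off from unchanged triangles and unchanged node positions, so $B^{(m)}$ is unchanged. Summing, the exchange matrices $B = \sum_m B^{(m)}$ of $T$ and $T'$ agree. The remaining cases $L_u$, $R_d$, $R_u$ are handled by the same argument, exchanging top and bottom or left and right as appropriate; in each case the modified triangle abuts a boundary edge of the trapezoid, which is precisely what forces the contribution of its node to vanish. The only step requiring any real care is the boundary-string analysis isolating $B^{(m_0)} = 0$, and it is essentially immediate from the picture.
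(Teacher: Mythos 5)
Your proof is correct and takes essentially the same route as the paper's: the paper's ``basic observation'' is precisely your point that no closed string meets the leftmost triangle transversely (combined with the openness of the left-adjacent string on level $j_1$), from which each $B^{(m)}$ is preserved under the replacement. Your write-up is only slightly more explicit than the paper's in isolating $B^{(m_0)}=0$ so that the sign flip of $\epsilon(m_0)$ is visibly harmless, but the underlying idea is identical.
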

\begin{proof}
The proof here is essentially contained in the proof of \cite{Shen-Weng-2021}*{Proposition 4.2}. We prove the case $N=L_d$. By the similar arguments, one can prove the other cases.

Let $B$ be the exchange matrix of $(S_{\bf j}^{\bf i}(A), T)$. Recall that $B$ is defined by 
 $$B=\sum_{\text{nodes}\; m\;\text{in}\; T}B^{(m)}.$$

 A basic observation is that there is no closed strings in the string diagram of $T$ intersect the leftmost triangle of $T$ transversely. Thanks to this observation and by the definition of $B^{(m)}$, we know that the following replacement 
 $$
\begin{tikzpicture}[scale=0.60]
\node (v1) at (0,0) {$\bullet$};
\node (v2) at (-1,-1) {$\bullet$};
\node (v3) at (1,-1) {$\bullet$};
\draw (v1.center)--(v2.center)-- node[below]{$j_1$}(v3.center)--(v1.center);
\draw[-stealth](2,-0.5)--(3,-0.5);
\node (l1) at (5,-1) {$\bullet$};
\node (l2) at (4,0) {$\bullet$};
\node (l3) at (6,0) {$\bullet$};
\draw (l1.center)--(l2.center)-- node[above]{$j_1$}(l3.center)--(l1.center);
\end{tikzpicture}$$
 for the leftmost triangle of $T$ preserves each $B^{(m)}$. Hence, the exchange matrices of $(S_{\bf j}^{\bf i}(A), T)$ and  $N(S_{\bf j}^{\bf i}(A), T)=L_d(S_{\bf j}^{\bf i}(A), T)$ are the same.
\end{proof}

Let $({\bf i},{\bf j})$ be a pair of $[1,n]$-sequences, 
say 
$${\bf i}=(i_1,\dots,i_{\ell(\bf i)})\;\;\;\text{and}\;\;\;{\bf j}=(j_1,\dots,j_{\ell(\bf j)}).$$
We denote by ${\bf i}^{-1}:=(i_{\ell({\bf i})},\cdots,i_1)$ and ${\bf i}^{-1}\circ {\bf j}:=(i_{\ell({\bf i})},\cdots,i_1,j_1,\cdots,j_{\ell({\bf j})})$.

\begin{proposition}
\label{cor:move}
Let $B$ be the exchange matrix of a triangulation $T$ of $S_{\bf j}^{\bf i}(A)$ and $B_{{\bf i}^{-1}\circ {\bf j}}$ the exchange matrix of the (unique) triangulation $T_{{\bf i}^{-1}\circ {\bf j}}$ of $S_{{\bf i}^{-1}\circ {\bf j}}^{\emptyset}(A)$. Then $B$ and $B_{{\bf i}^{-1}\circ {\bf j}}$ can be obtained from each other by a sequence of mutations.
\end{proposition}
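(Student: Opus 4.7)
The strategy is to induct on $\ell({\bf i})$, combining Lemma \ref{pro:move} (invariance of the exchange matrix under $L_u, L_d, R_u, R_d$) with Propositions \ref{pro:sw1} and \ref{pro:sw2} (any two triangulations of the same trapezoid are flip-equivalent, with each flip corresponding to at most one mutation). The base case $\ell({\bf i})=0$ is trivial: then $S_{\bf j}^{\bf i}(A)=S_{\bf j}^{\emptyset}(A)$ has a unique triangulation, so $B=B_{\bf j}=B_{{\bf i}^{-1}\circ{\bf j}}$ on the nose.

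For the inductive step, suppose $\ell({\bf i})>0$. First I would use Proposition \ref{pro:sw2} to transform $T$ via a sequence of flips into a triangulation $T'$ of $S_{\bf j}^{\bf i}(A)$ whose leftmost triangle is labeled by $i_1$. By Proposition \ref{pro:sw1}, the exchange matrix $B$ is mutation-equivalent to the exchange matrix $B'$ of $T'$. Next, applying the operation $L_u$ to $T'$ produces a triangulation $T''$ of $S_{{\bf j}'}^{{\bf i}'}(A)$, where ${\bf i}'=(i_2,\dots,i_{\ell({\bf i})})$ and ${\bf j}'=(i_1,j_1,\dots,j_{\ell({\bf j})})$; by Lemma \ref{pro:move}, $T''$ has the same exchange matrix $B'$. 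Since $\ell({\bf i}')<\ell({\bf i})$, the induction hypothesis applies to $T''$ and gives that $B'$ is mutation-equivalent to $B_{({\bf i}')^{-1}\circ{\bf j}'}$. A direct computation yields $({\bf i}')^{-1}\circ{\bf j}'=(i_{\ell({\bf i})},\dots,i_2,i_1,j_1,\dots,j_{\ell({\bf j})})={\bf i}^{-1}\circ{\bf j}$, which closes the induction.

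The main (though minor) point to verify is that a triangulation with leftmost triangle labeled $i_1$ exists whenever $\ell({\bf i})>0$. This is a purely combinatorial observation: the fan triangulation emanating from the bottom-left vertex of the trapezoid contains the triangle formed by that vertex together with the two endpoints of the top edge labeled $i_1$, and this triangle is leftmost in that fan. Once this existence is in hand, the rest of the argument is a clean assembly of Propositions \ref{pro:sw1}, \ref{pro:sw2} and Lemma \ref{pro:move}. One could equally well design the induction using $R_u$ from the right, but the single-sided version above seems the cleanest.
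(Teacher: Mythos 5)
Your proof is correct and follows essentially the same route as the paper: reduce $T$ via flips (Proposition \ref{pro:sw2}) to a triangulation whose leftmost triangle is labeled by $i_1$, peel off $i_1$ with $L_u$ using Lemma \ref{pro:move}, and iterate until ${\bf i}$ is empty. (The paper writes $R_u$ at this step but applies the formula for $L_u$, so your choice of $L_u$ matches its actual argument; your explicit verification that a triangulation with leftmost triangle labeled $i_1$ exists is a small point the paper leaves implicit.)
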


\begin{proof}
If ${\bf i}=\emptyset$, then $S_{\bf j}^{\bf i}(A)=S_{{\bf i}^{-1}\circ {\bf j}}^{\emptyset}(A)=S_{\bf j}^{\emptyset}(A)$. Then by the fact that $S_{\bf j}^{\emptyset}(A)$ has a unique triangulation, we get $T=T_{{\bf i}^{-1}\circ {\bf j}}$ and $B=B_{{\bf i}^{-1}\circ {\bf j}}$.

If ${\bf i}\neq \emptyset$, then there exists a triangulation $M$ of $S_{\bf j}^{\bf i}(A)$ such that the leftmost triangle of $M$ is labeled by the first integer in ${\bf i}$. Denote by $B_M$ the exchange matrix of $M$. By Proposition \ref{pro:sw2}, we know that $B$ and $B_M$ can be obtained from each other by a sequence of mutations.

Notice that $R_u(S_{\bf j}^{\bf i}(A),M)$ is defined for $M$ by the choice of $M$. Then
by Lemma \ref{pro:move}, we know that $B_M$ is also the exchange matrix of $R_u(S_{\bf j}^{\bf i}(A),M)=(S_{{\bf j}^\prime}^{{\bf i}^\prime}(A),M^\prime)$, where ${\bf i}^\prime=(i_2,\cdots,i_{\ell({\bf i})})\;\text{and}\;{\bf j}^\prime=(i_1,j_1,\dots,j_{\ell({\bf j})})$. So we reduce $({\bf i},{\bf j})$ to $({\bf i}^\prime,{\bf j}^\prime)$.

By repeating the above process, we get that $B$ and $B_{{\bf i}^{-1}\circ {\bf j}}$ can be obtained from each other by a sequence of mutations.
\end{proof}

We remark that Lemma \ref{pro:move} is very useful in practice, because some interesting mutation sequences can be visualized thanks to Lemma \ref{pro:move} and Proposition \ref{pro:sw1}.  We give an example to explain the ideas.
\begin{example}
Keep the notations in Example \ref{ex:gls}. The vertices of $Q_{\bf j}$ is $$I=\{{1\choose 1}\prec {1\choose 2}\prec{1\choose 3}\prec{2\choose 1}\prec{2\choose 2}\prec{2\choose 3}\prec{3\choose 1}\}$$ 
We consider the mutation sequence  $$\overset{\leftarrow}{\mu}=\mu_{{1\choose 3}}\circ \mu_{{1\choose 2}}\circ \mu_{{1\choose 1}}.$$
We want to realize each mutation in the above mutation sequence as a flip in a suitable triangulation of some trapezoid. 

The triangulation $T$ in Example \ref{ex:gls} gives the quiver $Q_{\bf j}$. However, the triangulation in Example \ref{ex:gls} clearly can not be flipped. Thanks to Lemma \ref{pro:move}, we know that $Q_{\bf j}$ can be also viewed as the quiver of $L_d(S_{\bf j}^\emptyset(A), T)=(L_d(S_{\bf j}^\emptyset(A)), T^\prime)$, which is as follows:

$$
\begin{tikzpicture}[scale=0.60]
  
       \node (a) at (0,0) {$\bullet$};
       \node (b1) at (-2,0) {$\bullet$};
         \node (b2) at (-8,-5) {$\bullet$};
           \node (b3) at (-6,-5) {$\bullet$};
             \node (b4) at (-4,-5) {$\bullet$};
               \node (b5) at (-2,-5) {$\bullet$};
                 \node (b6) at (0,-5) {$\bullet$};
                  \node (b7) at (2,-5) {$\bullet$};
                  \node (b8) at (4,-5) {$\bullet$};
           \node (b9) at (6,-5) {$\bullet$};
             \node (b10) at (8,-5) {$\bullet$};
               \node (b11) at (10,-5) {$\bullet$};
               
\draw (b1.center) -- (b2.center);    
\draw (b2.center) --node [below] {$2$} (b3.center);  
\draw (b3.center) --node [below] {$1$} (b4.center);    
\draw (b4.center) --node [below] {$3$} (b5.center); 
\draw (b5.center) --node [below] {$1$} (b6.center);
\draw (b6.center) --node [below] {$2$} (b7.center);
\draw (b7.center) --node [below] {$1$} (b8.center);
\draw (b8.center) --node [below] {$2$} (b9.center);
\draw (b9.center) --node [below] {$3$} (b10.center);
\draw (b10.center) --node [below] {$2$} (b11.center);
               \draw (a.center) --node [above] {$1$} (b1.center);
                 \draw (a.center) --node {$d_1$} (b2.center);
                   \draw (a.center) --node {$d_2$} (b3.center);
                     \draw (a.center) --node {$d_3$} (b4.center);
                       \draw (a.center) -- node {$d_4$}(b5.center);
                         \draw (a.center) -- node {$d_5$}(b6.center);
                           \draw (a.center) --node {$d_6$} (b7.center);
                             \draw (a.center) --node {$d_7$} (b8.center);
                               \draw (a.center) --node {$d_8$} (b9.center);
                                 \draw (a.center) --node {$d_9$} (b10.center);
                                  \draw (a.center) -- (b11.center);
            
\end{tikzpicture}
$$

In the above triangulation of $L_d(S_{\bf j}^\emptyset(A))$, we flip the diagonals  $$(d_1,d_2,d_3,d_4,d_5,d_6,d_7,d_8,d_9)$$ in the given order and this corresponds to the sequence  $$({\rm id},\mu_{{1\choose 1}},{\rm id},\mu_{{1\choose 2}},{\rm id},\mu_{{1\choose 3}},{\rm id},{\rm id},{\rm id})$$ 
acting on $Q_{\bf j}$ by Proposition \ref{pro:sw1}. Thus the mutation sequence $\overset{\leftarrow}{\mu}=\mu_{{1\choose 3}}\circ \mu_{{1\choose 2}}\circ \mu_{{1\choose 1}}$ can be visualized. After the sequence of flips, the new triangulation $T^{\prime\prime}$ of $L_d(S_{\bf j}^\emptyset(A))$ is as follows:

$$
\begin{tikzpicture}[scale=0.60]
  
       \node (a) at (0,0) {$\bullet$};
       \node (b1) at (-2,0) {$\bullet$};
         \node (b2) at (-8,-5) {$\bullet$};
           \node (b3) at (-6,-5) {$\bullet$};
             \node (b4) at (-4,-5) {$\bullet$};
               \node (b5) at (-2,-5) {$\bullet$};
                 \node (b6) at (0,-5) {$\bullet$};
                  \node (b7) at (2,-5) {$\bullet$};
                  \node (b8) at (4,-5) {$\bullet$};
           \node (b9) at (6,-5) {$\bullet$};
             \node (b10) at (8,-5) {$\bullet$};
               \node (b11) at (10,-5) {$\bullet$};
               
\draw (b1.center) -- (b2.center);    
\draw (b2.center) --node [below] {$2$} (b3.center);  
\draw (b3.center) --node [below] {$1$} (b4.center);    
\draw (b4.center) --node [below] {$3$} (b5.center); 
\draw (b5.center) --node [below] {$1$} (b6.center);
\draw (b6.center) --node [below] {$2$} (b7.center);
\draw (b7.center) --node [below] {$1$} (b8.center);
\draw (b8.center) --node [below] {$2$} (b9.center);
\draw (b9.center) --node [below] {$3$} (b10.center);
\draw (b10.center) --node [below] {$2$} (b11.center);
               \draw (a.center) --node [above] {$1$} (b1.center);
               
                   \draw (b1.center) -- (b3.center);
                     \draw (b1.center) -- (b4.center);
                       \draw (b1.center) -- (b5.center);
                         \draw (b1.center) -- (b6.center);
                           \draw (b1.center) -- (b7.center);
                             \draw (b1.center) -- (b8.center);
                               \draw (b1.center) -- (b9.center);
                                 \draw (b1.center) -- (b10.center);
                                  \draw (b1.center) -- (b11.center);
                                  \draw (a.center) -- (b11.center);
\end{tikzpicture}
$$

If we further apply $R_u$ to $(L_d(S_{\bf j}^\emptyset(A)), T^{\prime\prime})$, we get the following triangulation of $R_uL_d(S_{\bf j}^\emptyset(A))$.

$$
\begin{tikzpicture}[scale=0.60]
  
       \node (a) at (12,-5) {$\bullet$};
       \node (b1) at (-2,0) {$\bullet$};
         \node (b2) at (-8,-5) {$\bullet$};
           \node (b3) at (-6,-5) {$\bullet$};
             \node (b4) at (-4,-5) {$\bullet$};
               \node (b5) at (-2,-5) {$\bullet$};
                 \node (b6) at (0,-5) {$\bullet$};
                  \node (b7) at (2,-5) {$\bullet$};
                  \node (b8) at (4,-5) {$\bullet$};
           \node (b9) at (6,-5) {$\bullet$};
             \node (b10) at (8,-5) {$\bullet$};
               \node (b11) at (10,-5) {$\bullet$};
               
\draw (b1.center) -- (b2.center);    
\draw (b2.center) --node [below] {$2$} (b3.center);  
\draw (b3.center) --node [below] {$1$} (b4.center);    
\draw (b4.center) --node [below] {$3$} (b5.center); 
\draw (b5.center) --node [below] {$1$} (b6.center);
\draw (b6.center) --node [below] {$2$} (b7.center);
\draw (b7.center) --node [below] {$1$} (b8.center);
\draw (b8.center) --node [below] {$2$} (b9.center);
\draw (b9.center) --node [below] {$3$} (b10.center);
\draw (b10.center) --node [below] {$2$} (b11.center);
               \draw (a.center) -- (b1.center);
               
                   \draw (b1.center) -- (b3.center);
                     \draw (b1.center) -- (b4.center);
                       \draw (b1.center) -- (b5.center);
                         \draw (b1.center) -- (b6.center);
                           \draw (b1.center) -- (b7.center);
                             \draw (b1.center) -- (b8.center);
                               \draw (b1.center) -- (b9.center);
                                 \draw (b1.center) -- (b10.center);
                                  \draw (b1.center) -- (b11.center);
                                  \draw (a.center) --node [below] {$1$} (b11.center);
\end{tikzpicture}
$$
\end{example}

\subsection{Exchange matrices from string diagrams are in the class in \texorpdfstring{$\mathcal P'$}{Lg}} In this section, we give the proof of our main theorem.

\begin{lemma}\label{lem:empty}
Let $T_{\bf j}$ be the (unique) triangulation of $S_{\bf j}^{\emptyset}(A)$ and $D(T_{\bf j})$ the string diagram of $T_{\bf j}$. Let $j_1$ be the first integer in ${\bf j}$ and $m\geq 0$ the number of closed strings on the $j_1$-th level of $D(T_{\bf j})$. Suppose that $m\geq 1$ and put  $$\overset{\leftarrow}{\mu}:=\mu_{j_1 \choose 2}\circ\cdots \circ \mu_{j_1 \choose m-1}\circ \mu_{j_1 \choose m}.$$
Let $B=(b_{ij})_{i,j\in I}$ be the exchange matrix of $T_{\bf j}$, then the column of $B^\prime:=\overset{\leftarrow}{\mu}(B)=(b_{ij}^\prime)_{i,j\in I}$ indexed by the closed string ${j_1 \choose 1}$ is a non-negative vector.
\end{lemma}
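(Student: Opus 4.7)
The plan is to follow what happens to the column $b_{\cdot,{j_1\choose 1}}$ one mutation at a time, show that it is essentially unchanged until the last mutation, and then analyze the effect of that last mutation in detail.

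First, I will compute the column of ${j_1\choose 1}$ in the initial matrix $B$ using the construction of the exchange matrix from the string diagram $D(T_{\bf j})$. The key geometric observation is that the leftmost node $m_0$ of ${j_1\choose 1}$ lies inside the leftmost triangle of $T_{\bf j}$, and no closed string on a level $\ell\neq j_1$ can pass through this triangle (every such string on another level reaches the left boundary as an open string). Hence $m_0$ contributes nothing to the column. Carrying out the straddling case analysis for the contributions from the right endpoint $m_1$ of ${j_1\choose 1}$ (sitting at the position $k_1$ of the second node on level $j_1$) and from the endpoints of the cross-level closed strings that intersect ${j_1\choose 1}$, one obtains: $b_{{j_1\choose 2},{j_1\choose 1}}=-1$ is the unique negative entry of the column; $b_{z,{j_1\choose 1}}=-a_{\ell,j_1}\ge 0$ for each closed string $z$ on a level $\ell\ne j_1$ whose span straddles $k_1$; and all remaining entries vanish.

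Second, I will show by induction on $s=0,1,\ldots,m-2$ that $b^{(s)}_{{j_1\choose k},{j_1\choose 1}}=0$ for every $k\ge 3$, and hence that the column of ${j_1\choose 1}$ is unchanged by the first $m-2$ mutations $\mu_{{j_1\choose m}},\ldots,\mu_{{j_1\choose 3}}$. The base case $s=0$ is part of Step~1. For the inductive step, the mutation formula at $\mu_{{j_1\choose k}}$ changes the column of ${j_1\choose 1}$ only through the vanishing entry $b^{(s)}_{{j_1\choose k},{j_1\choose 1}}$, and the zero pattern propagates to all other $k'\ge 3$ for the same reason. The final mutation $\mu_{{j_1\choose 2}}$ then produces, using $b^{(m-2)}_{{j_1\choose 2},{j_1\choose 1}}=-1$, the entry $b'_{{j_1\choose 2},{j_1\choose 1}}=1$ and, for $i\ne{j_1\choose 2}$,
\[
b'_{i,{j_1\choose 1}}\;=\;b_{i,{j_1\choose 1}}+\min\!\bigl(b^{(m-2)}_{i,{j_1\choose 2}},\,0\bigr).
\]
Since $b_{i,{j_1\choose 1}}\ge 0$ for all $i\ne{j_1\choose 2}$ by Step~1, non-negativity of the new column reduces to the pointwise inequality $b_{i,{j_1\choose 1}}+b^{(m-2)}_{i,{j_1\choose 2}}\ge 0$ whenever $b^{(m-2)}_{i,{j_1\choose 2}}<0$.

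The main obstacle, and the technical heart of the proof, is to verify this last inequality. The strategy is to run an analogous zero-propagation argument for the row of ${j_1\choose 2}$: one shows that $b^{(s)}_{{j_1\choose k},{j_1\choose 2}}=0$ for all $k\ge 4$ throughout, so only the penultimate mutation $\mu_{{j_1\choose 3}}$ can introduce new negative cross-level contributions to $b_{\cdot,{j_1\choose 2}}$. These new contributions can be read off explicitly from the straddling data in Step~1, and when a negative $b^{(m-2)}_{z,{j_1\choose 2}}$ arises for a cross-level closed string $z$, matching it with the corresponding straddler contribution to $b_{z,{j_1\choose 1}}$ yields the desired inequality. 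The tightest case corresponds to closed strings on a level $\ell$ which straddle both $k_1$ and some later $k_r$: there the cascade of entries created by $\mu_{{j_1\choose m}},\ldots,\mu_{{j_1\choose 3}}$ along the $j_1$-th level is exactly balanced against the straddler entries of $b_{\cdot,{j_1\choose 1}}$, and the inequality becomes an equality.
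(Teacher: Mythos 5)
Your proposal is correct, and it rests on the same two structural facts as the paper's proof: on the $j_1$-th level the closed strings form a linearly oriented chain $z_1\leftarrow z_2\leftarrow\cdots\leftarrow z_m$, and each closed string $x$ on another level meets that chain in at most two bundles of inclined arrows arranged as $z_p\rightsquigarrow x\rightsquigarrow z_q$ with the leftmost arrow leaving level $j_1$ (your ``straddling'' analysis, including the observation that nothing crosses the leftmost triangle, is exactly how the paper justifies $b_{xz_1}\ge 0$ initially). The difference is organizational. The paper freezes every vertex outside the $j_1$-th level, observes that the mutation sequence can then be computed one cross-level vertex $x$ at a time on the small quiver consisting of the $A_m$-chain together with $x$, and simply exhibits the mutated quiver in each of the finitely many configurations ($\alpha=0$; one bundle with $p=1$ or $p\neq 1$; two bundles), reading off $b'_{xz_1}\in\{0,-a_{kj_1}\}$. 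You instead track the single column of $z_1$ through the sequence, use zero-propagation to show only the final mutation $\mu_{z_2}$ touches it, and reduce everything to the inequality $b_{xz_1}+\min(b^{(m-2)}_{xz_2},0)\ge 0$; the price is that $b^{(m-2)}_{xz_2}$ is not determined by the initial straddling data alone but by the full recursion $c^{(k)}_x=b_{xz_k}+\min(c^{(k+1)}_x,0)$ propagating from $z_m$ down to $z_2$. You acknowledge this cascade but do not carry it out; it does close correctly (one gets $c^{(2)}_x=a_{\ell j_1}$ exactly when $p=1\le 2\le q$, cancelling $b_{xz_1}=-a_{\ell j_1}$, and $c^{(2)}_x\le 0$ with $b_{xz_1}=0$ otherwise), and this matches the paper's picture in which the configuration $z_p\to x\to z_q$ is carried to $z_{p+1}\to x\to z_{q+1}$. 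So there is no gap in the idea, but to be a complete proof your Step~3 must actually run that recursion; the paper's device of mutating the explicit $(m+1)$-vertex quiver is the cleaner way to package the same computation.
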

\begin{proof}

If $m=1$, then $\overset{\leftarrow}{\mu}={\rm id}$ and $B^\prime=B$. In this case,  the result follows from the definition of $B$. So we can assume that $m\geq 2$.

Denote by $I_{j_1}$ the set of closed strings on the $j_1$-th level and by $z_i:={j_1\choose i}$ the $i$-th closed string in $I_{j_1}$. In the mutation sequence  $$\overset{\leftarrow}{\mu}=\mu_{j_1 \choose 2}\circ\cdots \circ \mu_{j_1 \choose m-1}\circ \mu_{j_1 \choose m}=\mu_{z_2}\cdots \mu_{z_{m-1}}\mu_{z_m},$$ the vertices outside $I_{j_1}$ are not mutated. So we can just freeze the vertices outside $I_{j_1}$. This means that we can just set $b_{xz}=0$ in $B$ for any $z\notin I_{j_1}$. Now $B$ takes the following form:
 
 $$B=\begin{bmatrix}B_{j_1}^\circ&0\\C&0\end{bmatrix},$$
 where $B_{j_1}:=\begin{bmatrix}B_{j_1}^\circ\\ C\end{bmatrix}$  and $B_{j_1}^\circ$ are the $I\times I_{j_1}$ and $I_{j_1}\times I_{j_1}$ submatrices of $B$ respectively. Notice that the usual quiver of $B_{j_1}^\circ$ is as follows:
 $$\xymatrix{z_1& z_2\ar[l]&z_3\ar[l]& \cdots\ar[l]&z_{m-1}\ar[l]&z_m\ar[l]}$$

 Recall that we want to prove that the column of $B^\prime=\overset{\leftarrow}{\mu}(B)=(b_{ij}^\prime)_{i,j\in I}$ indexed by the closed string $z_1={j_1 \choose 1}$ is a non-negative vector. It suffices to prove $b_{xz_1}^\prime\geq 0$ for each $x\in I$. We distinguish it into two cases: Case (i): $x\in I_{j_1}$ and Case (ii): $x\notin I_{j_1}$.
 
 Case (i): Assume $x\in I_{j_1}$. We want to prove $b_{xz_1}^\prime\geq 0$. By $x\in I_{j_1}$, we can just set $C=0$ and then $B$ takes the form $$B=\begin{bmatrix}B_{j_1}^\circ&0\\0&0\end{bmatrix}.$$
 After the mutation sequence $\overset{\leftarrow}{\mu}=\mu_{z_2}\cdots\mu_{z_{m-1}} \mu_{z_m}$, the usual quiver of $B_{j_1}^\circ$ is mutated to the following quiver.
 
 $$\xymatrix{z_1\ar[r]& z_2&z_3\ar[l]& \cdots\ar[l]&z_{m-1}\ar[l]&z_m\ar[l]}$$
 So we have $b_{xz_1}^\prime\in\{0,1\}$ and thus $b_{xz_1}^\prime\geq 0$ holds in this case.

 Case (ii): Assume $x\notin I_{j_1}$, say, $x$ is on the $k$-th level ($k\neq j_1$). We want to prove $b_{xz_1}^\prime\geq 0$.
  In this case, we can set the rows of $C$ not indexed by $x$ to the zero vector. Then  $B$ takes the form $$B=\begin{bmatrix}B_{j_1}^\circ&0\\ \alpha&0\\0&0\end{bmatrix},$$
 where $\alpha$ is the $x$-th row of $C$. 
 
Notice that in the coloured quiver of the exchange matrix of $(S_{\bf j}^\emptyset(A), T_{\bf j})$, if the inclined arrows between the levels $j_1$ and  $k$ exist, then they look like as follows.
 \[\xymatrixrowsep{5mm}
\xymatrixcolsep{5mm}
 \xymatrix{{\rm Level}\;\; j_1& z_{a_1}\ar@{~>}[rd]&&z_{a_2}\ar@{~>}[rd]&&\dots&&z_{a_{s-1}}\ar@{~>}[rd]&&z_{a_s}\ar@{~>}[rd]
 \\ 
 {\rm Level}\;\; k&&x_{b_1}\ar@{~>}[ru]&&x_{b_2}\ar@{~>}[ru]&&\dots\ar@{~>}[ru]&&x_{b_{t-1}}\ar@{~>}[ru]&&x_{b_t}
 }
 \]
 (The inclined arrow $\xymatrix{z_{a_s}\ar@{~>}[r]&x_{b_t}}$ does not exist necessarily. But the leftmost inclined arrow between level $j_1$ and level $k$ always start at the level $j_1$.) Since $x$ is a closed string on the level $k$ and $\alpha$ is the $x$-th row of $C$, we get that $\alpha$ has at most two non-zero entries.

 If $\alpha=0$, then after the mutation sequence $\overset{\leftarrow}{\mu}=\mu_{z_2}\cdots\mu_{z_{m-1}} \mu_{z_m}$, we have   $$\overset{\leftarrow}{\mu}\begin{bmatrix}B_{j_1}^\circ\\ \alpha \end{bmatrix}=\overset{\leftarrow}{\mu}\begin{bmatrix}B_{j_1}^\circ\\ 0 \end{bmatrix}=\begin{bmatrix}\ast \\ 0 \end{bmatrix}.$$
 Thus $b_{xz_1}^\prime =0$ and $b_{xz_1}^\prime \geq 0$ holds in this case.
 
It remains to deal with the case $\alpha\neq 0$.  Because the matrix $\begin{bmatrix}B_{j_1}^\circ&-\alpha^{\rm T}\\ \alpha&0 \end{bmatrix}$ is actually a skew-symmetric matrix,  we can represent it using a usual quiver. 

If $\alpha$ has exactly one non-zero entry, then the usual quiver of $\begin{bmatrix}B_{j_1}^\circ&-\alpha^{\rm T}\\ \alpha&0 \end{bmatrix}$ takes the form

 $$\xymatrix{z_1&z_2\ar[l]&\ar[l]\cdots \ar[l]&z_p\ar[l]\ar[d]|{-a_{kj_1}}& z_{p+1}\ar[l]&\ar[l]\cdots\ar[l]&z_q\ar[l]&z_{q+1}\ar[l]&\ar[l]\dots\ar[l]&z_m\ar[l]\\ &&&x&&&&&&}
  $$
where $a_{kj_1}<0$ is the $(k,j_1)$-entry of the Cartan matrix $A$ and the $-a_{kj_1}$ arrows from $z_p$ to $x$ correspond to the non-zero entry of $\alpha$.

If $p=1$, then after the mutation sequence $\overset{\leftarrow}{\mu}=\mu_{z_2}\cdots\mu_{z_{m-1}} \mu_{z_m}$, the usual quiver of $\begin{bmatrix}B_{j_1}^\circ&-\alpha^{\rm T}\\ \alpha&0 \end{bmatrix}$ is mutated to the following quiver.

 $$\xymatrix{z_1\ar[r]\ar[d]|{-a_{kj_1}}\ar[r]&z_2&\cdots \ar[l]&z_p\ar[l]& z_{p+1}\ar[l]&\ar[l]\cdots\ar[l]&z_q\ar[l]&z_{q+1}\ar[l]&\ar[l]\dots\ar[l]&z_m\ar[l]\\ x&&&&&&&&&}
  $$
 Thus $b_{xz_1}^\prime=-a_{kj_1}>0$ and $b_{xz_1}^\prime\geq 0$ holds in this case.
 
 If $p\neq 1$, then 
 after the mutation sequence $\overset{\leftarrow}{\mu}=\mu_{z_2}\cdots\mu_{z_{m-1}} \mu_{z_m}$, the usual quiver of $\begin{bmatrix}B_{j_1}^\circ&-\alpha^{\rm T}\\ \alpha&0 \end{bmatrix}$ is mutated to the following quiver.
 
  $$\xymatrix{z_1\ar[r]&z_2&\ar[l]\cdots \ar[l]&z_p\ar[l]& z_{p+1}\ar[l]&\ar[l]\cdots\ar[l]&z_q\ar[l]&z_{q+1}\ar[l]&\ar[l]\dots\ar[l]&z_m\ar[l]\\ &&&x\ar[u]|{-a_{kj_1}}&&&&&&}
  $$
 Thus $b_{xz_1}^\prime =0$ and $b_{xz_1}^\prime \geq 0$ holds in this case.
 
 If $\alpha$ has exactly two non-zero entries, then the usual quiver of $\begin{bmatrix}B_{j_1}^\circ&-\alpha^{\rm T}\\ \alpha&0 \end{bmatrix}$ takes the form
 
$$ \xymatrix{z_1&z_2\ar[l]&\ar[l]\cdots \ar[l]&z_p\ar[l]\ar[rrd]|{-a_{kj_1}}& z_{p+1}\ar[l]&\ar[l]\cdots\ar[l]&z_q\ar[l]&z_{q+1}\ar[l]&\ar[l]\dots\ar[l]&z_m\ar[l]\\ &&&&&x\ar[ru]|{-a_{kj_1}}&&&&}
$$
 After the mutation sequence $\overset{\leftarrow}{\mu}=\mu_{z_2}\cdots\mu_{z_{m-1}} \mu_{z_m}$,
 the usual quiver of $\begin{bmatrix}B_{j_1}^\circ&-\alpha^{\rm T}\\ \alpha&0 \end{bmatrix}$ is mutated to the following quiver.
 
 $$ \xymatrix{z_1\ar[r]&z_2&\ar[l]\cdots \ar[l]&z_p\ar[l]& z_{p+1}\ar[l]\ar[rd]|{-a_{kj_1}}&\ar[l]\cdots\ar[l]&z_q\ar[l]&z_{q+1}\ar[l]&\ar[l]\dots\ar[l]&z_m\ar[l]\\ &&&&&x\ar[rru]|{-a_{kj_1}}&&&&}
$$
 (In case $q=m$, the $-a_{kj_1}$ arrows from $x$ to $z_{q+1}=z_{m+1}$ do not exist.) Thus $b_{xz_1}^\prime =0$ and $b_{xz_1}^\prime \geq 0$ holds in this case.

 By the results in Case (i) and Case (ii), we get $b_{xz_1}^\prime\geq 0$ for any $x\in I$. So the column of $B^\prime:=\overset{\leftarrow}{\mu}(B)=(b_{ij}^\prime)_{i,j\in I}$ indexed by the closed string $z_1={j_1 \choose 1}$ is a non-negative vector.
\end{proof}

By the similar arguments, we can prove the following dual result.

\begin{lemma}
Let $T_{\bf j}$ be the (unique) triangulation of $S_{\bf j}^{\emptyset}(A)$ and $D(T_{\bf j})$ the string diagram of $T_{\bf j}$. Let $j_\ell$ be the last integer in ${\bf j}$ and $m^\prime\geq 0$ the number of closed strings on the $j_\ell$-th level of $D(T_{\bf j})$. Suppose that $m^\prime\geq 1$ and put  $$(\overset{\leftarrow}{\mu})^\prime:=\mu_{j_\ell \choose m^\prime-1}\circ\cdots \circ \mu_{j_\ell \choose 2}\circ \mu_{j_\ell \choose 1}.$$
Let $B=(b_{ij})_{i,j\in I}$ be the exchange matrix of $T_{\bf j}$, then the column of $B^\prime:=(\overset{\leftarrow}{\mu})^\prime(B)=(b_{ij}^\prime)_{i,j\in I}$ indexed by the closed string ${j_\ell \choose m^\prime}$ is a non-positive vector.
\end{lemma}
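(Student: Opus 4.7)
The plan is to mirror the proof of Lemma \ref{lem:empty} using the left--right symmetry of the trapezoid and its string diagram. When $m^\prime=1$, the sequence $(\overset{\leftarrow}{\mu})^\prime$ is the identity and the column of $B$ indexed by ${j_\ell \choose 1}$ is non-positive directly from the definition of $B$, with the signs flipped compared to the corresponding case of Lemma \ref{lem:empty} because the relevant node is now the rightmost rather than leftmost node on its level.

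Assume $m^\prime \geq 2$. Write $I_{j_\ell}$ for the set of closed strings on level $j_\ell$ and set $z_i := {j_\ell \choose i}$. The mutations in $(\overset{\leftarrow}{\mu})^\prime$ occur only at vertices of $I_{j_\ell}$, so I freeze the remaining vertices; then $B$ restricts to a block matrix whose $I_{j_\ell} \times I_{j_\ell}$ part is the linear quiver $z_1 \leftarrow z_2 \leftarrow \cdots \leftarrow z_{m^\prime}$, exactly as in Lemma \ref{lem:empty}. For $x \in I_{j_\ell}$, apply $\mu_{z_1}, \mu_{z_2}, \ldots, \mu_{z_{m^\prime-1}}$ in this order; an easy induction shows the resulting quiver is
$$z_1 \leftarrow z_2 \leftarrow \cdots \leftarrow z_{m^\prime-1} \longrightarrow z_{m^\prime},$$
so $b^\prime_{x\, z_{m^\prime}} \in \{0,-1\}$, hence non-positive.

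For $x \notin I_{j_\ell}$ lying on a level $k \neq j_\ell$, the plan is to invoke the dual of the combinatorial observation used in the proof of Lemma \ref{lem:empty}: in the coloured quiver, when inclined arrows between level $j_\ell$ and level $k$ exist, the \emph{rightmost} such arrow always has its endpoint on level $j_\ell$, mirroring the statement that the leftmost inclined arrow between levels $j_1$ and $k$ always starts at level $j_1$. As a consequence, the $x$-row of $C$ has at most two non-zero entries, and the possible configurations of $(z_p, x, z_q)$ are mirror images of the three cases enumerated in Lemma \ref{lem:empty}. For each configuration, I would run the analogous mutation calculation from left to right, checking that after applying $\mu_{z_{m^\prime-1}} \cdots \mu_{z_1}$ the entry $b^\prime_{x\, z_{m^\prime}}$ equals $0$ or $a_{k j_\ell}$, both non-positive because $a_{k j_\ell} \leq 0$.

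The main obstacle is to formulate the dual combinatorial observation precisely and keep track of the sign flips: all arrows pointing one way become arrows pointing the opposite way after mirroring, and the closed string $z_{m^\prime}$ plays the role that $z_1$ played before. Once this setup is in place, the case analysis is essentially verbatim that of Lemma \ref{lem:empty} read from right to left, so no genuinely new quiver calculation is required beyond checking the two boundary positions where the mutation subword ends.
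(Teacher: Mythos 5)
Your proposal is correct and is exactly the argument the paper intends: the paper's own ``proof'' of this dual lemma is the single sentence ``By the similar arguments, we can prove the following dual result,'' referring back to Lemma \ref{lem:empty}, and your sketch fleshes out precisely that mirroring (the reversed linear-quiver mutation computation, the dual observation that the rightmost inclined arrow between level $j_\ell$ and level $k$ ends on level $j_\ell$ because the rightmost triangle is labelled $j_\ell$, and the resulting sign flips making the target entry $0$ or $a_{kj_\ell}\leq 0$). No discrepancy with the paper's approach.
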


\begin{example}
Keep the notations in Example \ref{ex:gls}. After the mutation sequence $$\overset{\leftarrow}{\mu}:=\mu_{j_1 \choose 2}\circ\cdots \circ \mu_{j_1 \choose m-1}\circ \mu_{j_1 \choose m}=\mu_{1\choose 2}\circ\mu_{1\choose 3},$$
the quiver $Q_{\bf j}$ in Example \ref{ex:gls} is mutated to the following quiver.
\[\xymatrixrowsep{5mm}
\xymatrixcolsep{5mm}
\xymatrix{{1 \choose 1}\ar[rr] &&{1 \choose 2}\ar[ld]|{3} & &{1 \choose 3}\ar[ll]&& \\
 &{2 \choose 1}\ar[rrrr]|{8}\ar[rrd]|8 && &&{2 \choose 2}\ar[lu]|3 &{2 \choose 3}\ar[l] \\
  &&&{3 \choose 1}\ar@/_{-1pc}/[luu]|2\ar[rrru]|2 &&&}
  \]
One can see that the vertex ${1\choose 1}$ becomes a source vertex.

After the mutation sequence $$(\overset{\leftarrow}{\mu})^\prime:=\mu_{j_\ell \choose m^\prime-1}\circ\cdots \circ \mu_{j_\ell \choose 2}\circ \mu_{j_\ell \choose 1}=\mu_{2\choose 2}\circ\mu_{2\choose 1},$$
the quiver $Q_{\bf j}$ in Example \ref{ex:gls} is mutated to the following quiver.
\[\xymatrixrowsep{5mm}
\xymatrixcolsep{5mm}
\xymatrix{{1 \choose 1}\ar@/_{-1.2pc}/[rrrr]|9\ar@/_{-1.3pc}/[rrrdd]|6 &&{1 \choose 2}\ar[ll]\ar@/_{-1.5pc}/[rdd]|2 & &{1 \choose 3}\ar[ll]\ar[llld]|3&& \\
 &{2 \choose 1}\ar[lu]|3 && &&{2 \choose 2} \ar[llll]\ar[r]&{2 \choose 3} \\
  &&&{3 \choose 1}\ar[rru]|2 &&&}
\]
One can see that the vertex ${2\choose 3}$ becomes a sink vertex.
\end{example}

\begin{theorem}\label{thm:string}
Let $B$ be the exchange matrix of a triangulation $T$ of $S_{\bf j}^{\bf i}(A)$. Then $B$ is in the class $\mathcal P'$.
\end{theorem}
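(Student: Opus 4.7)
The plan is to first reduce to the case ${\bf i} = \emptyset$ via Proposition \ref{cor:move}, and then proceed by induction on $\ell({\bf j})$.

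By Proposition \ref{cor:move}, $B$ is mutation-equivalent to $B_{{\bf i}^{-1}\circ {\bf j}}$, the exchange matrix of $T_{{\bf i}^{-1}\circ {\bf j}}$ on $S_{{\bf i}^{-1}\circ {\bf j}}^\emptyset(A)$. Since $\mathcal P'$ is closed under mutations, it suffices to prove $B_{\bf j} \in \mathcal P'$ for every $[1,n]$-sequence ${\bf j}$. We do this by induction on $\ell({\bf j})$; the base cases $\ell({\bf j}) \leq 1$ produce an empty exchange matrix and are vacuous.

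For the induction step, write ${\bf j} = (j_1, \dots, j_\ell)$ and let $m \geq 0$ denote the number of closed strings on the $j_1$-th level of $D(T_{\bf j})$. If $m = 0$, then $j_1$ occurs only once in ${\bf j}$; the unique node on level $j_1$ has both adjacent strings open, and by the definition of $B^{(m)}$ this node contributes zero to $B_{\bf j}$. A short combinatorial check (using that the triangles of $T_{\bf j}$ other than the leftmost correspond bijectively, with all intersection patterns preserved, to the triangles of $T_{(j_2,\dots,j_\ell)}$) then shows $B_{\bf j} = B_{(j_2,\dots,j_\ell)}$, which is in $\mathcal P'$ by the inductive hypothesis. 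If $m \geq 1$, apply the mutation sequence $\overset{\leftarrow}{\mu} = \mu_{j_1 \choose 2}\circ \cdots \circ \mu_{j_1 \choose m}$. By Lemma \ref{lem:empty}, the column of $\overset{\leftarrow}{\mu}(B_{\bf j})$ indexed by ${j_1 \choose 1}$ is non-negative, and by skew-symmetrizability the corresponding row is non-positive. Hence $\overset{\leftarrow}{\mu}(B_{\bf j})$ is a source-sink extension of the submatrix $B^\star$ obtained by deleting the row and column of ${j_1 \choose 1}$.

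The crucial remaining step is to identify $B^\star$ with the exchange matrix of a shorter trapezoid, namely $B^\star = B_{(j_2, \dots, j_\ell)}$ after the natural relabeling of closed strings. The strategy is to use Lemma \ref{pro:move} and Proposition \ref{pro:sw1} to realize $\overset{\leftarrow}{\mu}$ as a sequence of diagonal flips in an equivalent pair obtained from $(S_{\bf j}^\emptyset(A), T_{\bf j})$ by applying $L_d$: this moves $j_1$ onto the top edge and introduces diagonals that can be flipped in order, each flip realizing one of the mutations $\mu_{j_1 \choose m}, \dots, \mu_{j_1 \choose 2}$ via Proposition \ref{pro:sw1}. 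After the full flip sequence, the closed string ${j_1 \choose 1}$ is detached from the rest of the diagram, and the remainder is precisely $D(T_{(j_2,\dots,j_\ell)})$. Then $B^\star = B_{(j_2,\dots,j_\ell)} \in \mathcal P'$ by induction; closure of $\mathcal P'$ under source-sink extensions gives $\overset{\leftarrow}{\mu}(B_{\bf j}) \in \mathcal P'$, and closure under mutations yields $B_{\bf j} \in \mathcal P'$. The main obstacle is this final combinatorial identification --- a careful bookkeeping of how nodes, closed strings, and the local matrices $B^{(m)}$ evolve under the visualized flip sequence, to verify that the resulting diagram does split as an isolated source ${j_1 \choose 1}$ together with $D(T_{(j_2,\dots,j_\ell)})$.
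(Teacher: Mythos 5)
Your overall architecture coincides with the paper's: reduce to ${\bf i}=\emptyset$ via Proposition \ref{cor:move}, induct on $\ell({\bf j})$, and use Lemma \ref{lem:empty} to exhibit $\overset{\leftarrow}{\mu}(B_{\bf j})$ as a source-sink extension of the principal submatrix $B^\star$ on $I\setminus\{\binom{j_1}{1}\}$. The gap is in your ``crucial remaining step''. First, the identity you aim for, $B^\star=B_{(j_2,\dots,j_\ell)}$, is false in general: since $\overset{\leftarrow}{\mu}$ mutates only at vertices other than $\binom{j_1}{1}$, deleting the row and column of $\binom{j_1}{1}$ commutes with $\overset{\leftarrow}{\mu}$, so the correct statement is $B^\star=\overset{\leftarrow}{\mu}\bigl(B_{(j_2,\dots,j_\ell)}\bigr)$; this mutation genuinely changes entries between vertices off level $j_1$ (see the example following Lemma \ref{lem:empty}, where $\mu_{\binom{1}{2}}\circ\mu_{\binom{1}{3}}$ alters the arrows among the level-$2$ and level-$3$ vertices). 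Second, the proposed mechanism cannot produce $\overset{\leftarrow}{\mu}$ anyway: after applying $L_d$, the flips that trigger mutations via Proposition \ref{pro:sw1} become available only from the left, so the realizable composition is $\mu_{\binom{j_1}{m}}\circ\cdots\circ\mu_{\binom{j_1}{2}}\circ\mu_{\binom{j_1}{1}}$, which applies $\mu_{\binom{j_1}{1}}$ first and proceeds left to right, whereas $\overset{\leftarrow}{\mu}=\mu_{\binom{j_1}{2}}\circ\cdots\circ\mu_{\binom{j_1}{m}}$ applies $\mu_{\binom{j_1}{m}}$ first, proceeds right to left, and omits $\binom{j_1}{1}$ entirely.

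The repair is simpler than what you attempt and is what the paper does: make the identification on the \emph{unmutated} matrix. The node $m_0$ in the leftmost triangle of $T_{\bf j}$ contributes the zero matrix to $B_{\bf j}$, because its left-adjacent string on level $j_1$ is open and no closed string on any other level meets the leftmost triangle transversely; all other nodes and strings of $D(T_{\bf j})$ survive unchanged in $D(T_{{\bf j}_{\geq 2}})$ except that $\binom{j_1}{1}$ becomes open. Hence deleting the row and column of $\binom{j_1}{1}$ from $B_{\bf j}$ yields exactly $B_{(j_2,\dots,j_\ell)}$. Combining this with the commutation noted above gives $B^\star=\overset{\leftarrow}{\mu}\bigl(B_{(j_2,\dots,j_\ell)}\bigr)$, which lies in $\mathcal P'$ by the inductive hypothesis and closure of $\mathcal P'$ under mutations; the rest of your argument then goes through verbatim.
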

\begin{proof}
We first deal with the case ${\bf i}=\emptyset$. Then we use the result for $S_{\bf j}^{\emptyset}(A)$ to deduce the result for the general case.

Now we assume ${\bf i}=\emptyset$.  We prove the result for $S_{\bf j}^{\emptyset}(A)$ by induction on the length $\ell({\bf j})$ of  $${\bf j}=(j_1,j_2,\cdots,j_{\ell({\bf j})}).$$

If $\ell({\bf j})=1$, then the string diagram of $(S_{\bf j}^{\emptyset}(A), T)$ has no closed strings. So the vertex set of $B$ is empty. In this case, $B$ is in the class $\mathcal P^\prime$ following the usual convention.

Assume by induction that the result holds for  $(S_{{\bf j}^\prime}^\emptyset(A), T^\prime)$ with $\ell({\bf j}^\prime)<\ell({\bf j})$.

For ${\bf j}=(j_1,j_2,\cdots,j_{\ell({\bf j})})$, we denote by ${\bf j}_{\geq 2}:=(j_2,\dots,j_{\ell({\bf j})})$. Let $T_{\geq 2}$ be the (unique) triangulation of $S_{{\bf j}_{\geq 2}}^\emptyset(A)$ and $B_{\geq 2}$ the exchange matrix of $T_{\geq 2}$. Since $\ell({\bf j}_{\geq 2})<\ell({\bf j})$, we can apply the inductive hypothesis and get that $B_{\geq 2}$ is in the class $\mathcal P^\prime$.

Let $D(T)$ be the string diagram of $(S_{\bf j}^{\emptyset}(A), T)$
and denote by $m\geq 0$ the number of closed string on the $j_1$-th level of $D(T)$. If $m=0$, then $B=B_{\geq 2}$. Since $B_{\geq 2}$ is in the class $\mathcal P^\prime$, so is $B$. 

So we can assume $m\geq 1$. In this case, ${j_1\choose 1}$ is a non-empty closed string and thus a vertex of $B$.
By the definition of $B$ and $B_{\geq 2}$, we know that $B_{\geq 2}$ is obtained from $B$ by deleting its row and column indexed by the closed string ${j_1\choose 1}$.

Let $\overset{\leftarrow}{\mu}=\mu_{j_1 \choose 2}\circ\cdots \circ \mu_{j_1 \choose m-1}\circ \mu_{j_1 \choose m}$ be the mutation sequence in Lemma \ref{lem:empty}. By Lemma \ref{lem:empty}, we know that $\overset{\leftarrow}{\mu}(B)$ is a source-sink extension of $\overset{\leftarrow}{\mu}(B_{\geq 2})$. Since $B_{\geq 2}$ is in the class $\mathcal P^\prime$ and the class $\mathcal P^\prime$ is closed under mutations and source-sink extension, we get $\overset{\leftarrow}{\mu}(B)$ is in the class $\mathcal P^\prime$ and thus $B$ is in the class $\mathcal P^\prime$. This completes the induction.

Hence, the exchange matrix $B$ of $(S_{\bf j}^{\emptyset}(A), T)$ is in the class $\mathcal P^\prime$ for any ${\bf j}$. Now we use this result to deduce the result for the case ${\bf i}\neq \emptyset$.

 By Proposition \ref{cor:move}, we know that the exchange matrix $B$ of $(S_{\bf j}^{\bf i}(A), T)$ can be obtained from the exchange matrix $B_{{\bf i}^{-1}\circ {\bf j}}$ of $(S_{{\bf i}^{-1}\circ {\bf j}}^\emptyset(A), T_{{\bf i}^{-1}\circ {\bf j}})$ by a sequence of mutations, where $T_{{\bf i}^{-1}\circ {\bf j}}$ is the unique triangulation of $S_{{\bf i}^{-1}\circ {\bf j}}^\emptyset(A)$. Notice that we have proved that $B_{{\bf i}^{-1}\circ {\bf j}}$ is in the class $\mathcal P^\prime$. Because the class $\mathcal P^\prime$ is closed under mutations, we get $B$ is in the class $\mathcal P^\prime$. This completes the proof.
\end{proof}

As applications of Theorem \ref{thm:string}, we have the following corollaries. We refer to \cites{keller_demonet_2020,bm_2020} for the notation of reddening sequences and  \cite{DWZ_2008} for the basic notions on quivers with potentials $(Q,W)$ and Jacobian algebras $J(Q,W)$.

\begin{corollary}[\cite{Shen-Weng-2021}*{Section 4}]
\label{app1}
Let $B$ be the exchange matrix of a triangulation $T$ of $S_{\bf j}^{\bf i}(A)$. Then $B$ has a reddening sequence.
\end{corollary}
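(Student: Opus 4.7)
The plan is to deduce this corollary directly from Theorem \ref{thm:string} together with the known existence of reddening sequences for matrices in the class $\mathcal P$. The essential observation is the trivial inclusion $\mathcal P' \subseteq \mathcal P$ already recorded in the remark after the definition of $\mathcal P'$: a source-sink extension is the special case of a triangular extension in which one of the two pieces is the $1 \times 1$ zero matrix, so the closure axiom defining $\mathcal P$ is stronger than the one defining $\mathcal P'$, while the two classes share the same base case and the same closure under mutations. By minimality of $\mathcal P'$, the inclusion follows.

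With this in hand the argument is almost immediate. By Theorem \ref{thm:string}, the exchange matrix $B$ of any triangulation $T$ of $S_{\bf j}^{\bf i}(A)$ belongs to $\mathcal P'$, hence to $\mathcal P$. Applying \cite{bm_2020}*{Theorem 3.3}, which asserts that every skew-symmetrizable matrix in $\mathcal P$ admits a reddening sequence in the sense of \cite{keller_demonet_2020}, one concludes that $B$ has a reddening sequence.

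There is no essential obstacle here: all of the real work is carried out in Theorem \ref{thm:string} (the main theorem of the paper) and in the cited result about $\mathcal P$. An alternative, fully self-contained approach that bypasses the passage through $\mathcal P$ would instead induct on the recursive construction of $\mathcal P'$. The base case of the $1 \times 1$ zero matrix admits the trivial reddening sequence; the property of having a reddening sequence is invariant under mutation by a standard concatenation argument using that the framed quiver of a mutated matrix is the mutation of the framed quiver at the same vertex; and for a source-sink extension by a new vertex $k$, a reddening sequence of the smaller matrix followed by a single mutation at $k$ yields a reddening sequence of the extended matrix, since $k$ sits as an isolated source or sink after the initial sequence has reddened the other vertices. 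This essentially recovers the structure of the proof of \cite{bm_2020}*{Theorem 3.3}, but for the corollary as stated it suffices to invoke that theorem.
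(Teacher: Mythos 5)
Your proposal is correct and follows exactly the paper's own one-line argument: apply Theorem \ref{thm:string} to place $B$ in $\mathcal P'\subseteq\mathcal P$ and then invoke \cite{bm_2020}*{Theorem 3.3}. The additional self-contained induction you sketch is a reasonable unpacking of that cited theorem, but it is not needed and does not change the substance of the proof.
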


\begin{proof}
This follows from Theorem \ref{thm:string} and \cite{bm_2020}*{Theorem 3.3}.
\end{proof}

\begin{corollary}
\label{app2}
Let $B$ be the exchange matrix of a triangulation $T$ of $S_{\bf j}^{\bf i}(A)$.
Suppose that $A$ is symmetric. In this case, $B$ is skew-symmetric and thus corresponds to a quiver $Q_B$. Then $Q_B$ has a unique non-degenerate potential $W_{B}$ (up to right equivalence) which is rigid and its Jacobian algebra $J(Q_{B}, W_{B})$ is finite-dimensional.
\end{corollary}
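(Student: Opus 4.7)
The plan is to obtain Corollary \ref{app2} as an almost immediate consequence of Theorem \ref{thm:string} together with Ladkani's result \cite{lad-2013}*{Theorem 4.6} and the Derksen-Weyman-Zelevinsky framework for quivers with potentials \cite{DWZ_2008}, both of which are already recalled in the remark on the class $\mathcal P$.

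First, I would invoke Theorem \ref{thm:string} to conclude $B \in \mathcal P^\prime$, and since $\mathcal P^\prime \subseteq \mathcal P$ this gives $B \in \mathcal P$. Under the assumption that $A$ is symmetric, $B$ is skew-symmetric and so corresponds to a cluster quiver $Q_B$. It therefore suffices to prove the following general statement: for every skew-symmetric $C \in \mathcal P$, the quiver $Q_C$ admits a non-degenerate potential $W$ which is rigid and whose Jacobian algebra is finite-dimensional, and such $W$ is unique up to right equivalence.

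I would prove this by induction on the construction of $C$ inside $\mathcal P$. The base case is the $1 \times 1$ zero matrix, whose quiver is a single vertex with the zero potential; all three properties and the uniqueness are trivial. For the mutation step, I would appeal to \cite{DWZ_2008}: mutation of QPs preserves non-degeneracy and rigidity, is involutive on reduced QPs so preserves finite-dimensionality of the Jacobian algebra, and commutes with right equivalence, so uniqueness up to right equivalence propagates along mutations as well. For the triangular-extension step, Ladkani's Theorem 4.6 establishes that a triangular extension of two QPs each enjoying these properties again admits a potential (essentially the sum of the two potentials, together with the cycles running across the extension) which is non-degenerate, rigid, has finite-dimensional Jacobian algebra, and is the unique such potential up to right equivalence. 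Applying these inductive steps along a sequence of mutations and source-sink (hence triangular) extensions witnessing $C \in \mathcal P^\prime \subseteq \mathcal P$ yields the claim for $C = B$.

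The main obstacle has already been cleared by Theorem \ref{thm:string}: once $B$ is known to lie in $\mathcal P^\prime$, the quiver-with-potential theory of \cite{DWZ_2008} and the triangular-extension analysis of \cite{lad-2013} handle the rest without further subtlety, so the proof reduces to assembling these two inputs.
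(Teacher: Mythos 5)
Your proposal is correct and follows essentially the same route as the paper: the paper's proof is a one-line deduction from Theorem \ref{thm:string} together with Ladkani's Theorem 4.6, which already packages the inductive argument (base case, mutation step via \cite{DWZ_2008}, triangular-extension step) that you spell out explicitly. Your unpacking of that citation is accurate but adds no new ingredient beyond what the cited theorem provides.
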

\begin{proof}
This follows from Theorem \ref{thm:string} and \cite{lad-2013}*{Theorem 4.6}.
\end{proof}
\begin{remark}
In the case of ${\bf i}=\emptyset$ and ${\bf j}$ corresponding to a reduced word of an element of the Weyl group of the Cartan matrix $A$, Buan-Iyama-Reiten-Smith  constructed a rigid potential $W$ on $Q_{B}$ in \cite{BIRS_2011}*{Section 6}. Then the above corollary tells us that $W$ and $W_{B}$ are the same up to right equivalence in this case.
\end{remark}

Let us end this paper with a very concrete example of Theorem \ref{thm:string}.
\begin{example}
Take the Cartan matrix $A=\begin{bmatrix} 2&-2&-3\\-2&2&-4\\-3&-4&2
\end{bmatrix}$ and  $${\bf j}=(j_1,\cdots,j_{10},j_{11})=(2,1,3,2,1,3,1,3,2,2,1).$$
The string diagram of $S_{\bf j}^{\emptyset}(A)$ can be found in \cite{Shen-Weng-2021}*{Page 51} and the usual quiver $Q_{\bf j}$ of the exchange matrix $B_{\bf j}$ of $S_{\bf j}^{\emptyset}(A)$ is the following quiver.

\[\xymatrixrowsep{5mm}
\xymatrixcolsep{5mm}
\xymatrix{ &{1\choose 1}\ar@/_{1pc}/[rdd]|3\ar[rrd]|2& &&{1\choose 2}\ar[lll]\ar@/_{1pc}/[rdd]|3 && {1\choose 3}\ar[ll]&
\\ 
{2\choose 1}\ar[ru]|2\ar[rrd]|4&& & {2\choose 2}\ar[lll]\ar[rrru]|2& && &{2\choose 3}\ar[llll]
\\
&&{3\choose 1}\ar[ru]|4 \ar@/_{-1.5pc}/[rruu]|3&& &{3\choose 2}\ar[lll]\ar@/_{-1pc}/[ruu]|3& &}
\]

By applying $\mu_{{2\choose 2}}\circ\mu_{{2\choose 3}}$ to the above quiver, we get the following quiver.

\[
\xymatrix{ &{1\choose 1}\ar@/_{1pc}/[rdd]|3\ar[rrrrrrd]|2\ar@/_{-2pc}/[rrrrr]|4& &&{1\choose 2}\ar[lll]\ar@/_{-1.5pc}/[rdd]|3 && {1\choose 3}\ar[ll]\ar@/_{-2pc}/[llld]|2&
\\ 
{2\choose 1}\ar[rrr]&& & {2\choose 2}\ar[llu]|2\ar[ld]|4& && &{2\choose 3}\ar@/_{1.4pc}/[llll]
\\
&&{3\choose 1} \ar@/_{-1.5pc}/[rruu]|3\ar[rrrruu]|8\ar[rrrrru]|4&& &{3\choose 2}\ar[lll]\ar[ruu]|3& &}
\]

By removing the source vertex ${2\choose 1}$ and applying $\mu_{{2\choose 3}}\circ\mu_{{2\choose 2}}$ to the resulting quiver, we get the quiver associated with $S_{{\bf j}_{\geq 2}}^\emptyset(A)$, where ${\bf j}_{\geq 2}=(j_2,\cdots,j_{10},j_{11})=(1,3,2,1,3,1,3,2,2,1).$ 
\[\xymatrixrowsep{5mm}
\xymatrixcolsep{5mm}
\xymatrix{ &{1\choose 1}\ar@/_{1pc}/[rdd]|3\ar[rrd]|2& &&{1\choose 2}\ar[lll]\ar@/_{1pc}/[rdd]|3 && {1\choose 3}\ar[ll]&
\\ 
&& & {2\choose 2}\ar[rrru]|2& && &{2\choose 3}\ar[llll]
\\
&&{3\choose 1}\ar[ru]|4 \ar@/_{-1.5pc}/[rruu]|3&& &{3\choose 2}\ar[lll]\ar@/_{-1pc}/[ruu]|3& &}
\]

By applying $\mu_{{1\choose 2}}\circ\mu_{{1\choose 3}}$ to the above quiver, we get the following quiver.
\[\xymatrixrowsep{5mm}
\xymatrixcolsep{5mm}
\xymatrix{ &{1\choose 1}\ar[rrr]& &&{1\choose 2}\ar[ld]|2 \ar@/_{1.5pc}/[lldd]|3&& {1\choose 3}\ar[ll]\ar@/_{-1pc}/[ldd]|3&
\\ 
&& & {2\choose 2}& && &{2\choose 3}\ar[llll]
\\
&&{3\choose 1}\ar[ru]|4\ar@/_{1pc}/[rrrruu]|3&& &{3\choose 2}\ar[lll]& &}
\]

By removing the source vertex ${1\choose 1}$ and
applying $\mu_{{1\choose 3}}\circ\mu_{{1\choose 2}}$ to the resulting quiver, we get the quiver associated with $S_{{\bf j}_{\geq 3}}^\emptyset(A)$,  where ${\bf j}_{\geq 3}=(j_3,\cdots,j_{10},j_{11})=(3,2,1,3,1,3,2,2,1).$ 

\[\xymatrixrowsep{5mm}
\xymatrixcolsep{5mm}
\xymatrix{ && &&{1\choose 2}\ar@/_{1pc}/[rdd]|3 && {1\choose 3}\ar[ll]&
\\ 
&& & {2\choose 2}\ar[rrru]|2& && &{2\choose 3}\ar[llll]
\\
&&{3\choose 1}\ar[ru]|4 \ar@/_{-1.5pc}/[rruu]|3&& &{3\choose 2}\ar[lll]\ar@/_{-1pc}/[ruu]|3& &}
\]

By applying $\mu_{{3\choose 2}}$ to the above quiver, we get the following quiver.

\[\xymatrixrowsep{5mm}
\xymatrixcolsep{5mm}
\xymatrix{ && &&{1\choose 2}\ar[rr]|8 && {1\choose 3}\ar@/_{1.5pc}/[ldd]|3&
\\ 
&& & {2\choose 2}\ar[rrru]|2& && &{2\choose 3}\ar[llll]
\\
&&{3\choose 1}\ar[rrr]\ar[ru]|4&& &{3\choose 2}\ar@/_{-1pc}/[luu]|3& &}
\]

By removing the source vertex ${3\choose 1}$ and applying $\mu_{{3\choose 2}}$ to the resulting quiver, we get the quiver
associated with $S_{{\bf j}_{\geq 4}}^\emptyset(A)$, where ${\bf j}_{\geq 4}=(j_4,\cdots,j_{10},j_{11})=(2,1,3,1,3,2,2,1).$ 
\[\xymatrixrowsep{5mm}
\xymatrixcolsep{5mm}
\xymatrix{ && &&{1\choose 2}\ar@/_{1pc}/[rdd]|3 && {1\choose 3}\ar[ll]&
\\ 
&& & {2\choose 2}\ar[rrru]|2& && &{2\choose 3}\ar[llll]
\\
&&&& &{3\choose 2}\ar@/_{-1pc}/[ruu]|3& &}
\]

By applying $\mu_{{2\choose 3}}$ to the above quiver, we get the following quiver.
\[\xymatrixrowsep{5mm}
\xymatrixcolsep{5mm}
\xymatrix{ && &&{1\choose 2}\ar@/_{1pc}/[rdd]|3 && {1\choose 3}\ar[ll]&
\\ 
&& & {2\choose 2}\ar[rrru]|2\ar[rrrr]& && &{2\choose 3}
\\
&&&& &{3\choose 2}\ar@/_{-1pc}/[ruu]|3& &}
\]

By removing the source vertex ${2\choose 2}$ and applying  $\mu_{{2\choose 3}}$ to the resulting quiver, we get the quiver associated with $S_{{\bf j}_{\geq 5}}^\emptyset(A)$, where ${\bf j}_{\geq 5}=(j_5,\cdots,j_{10},j_{11})=(1,3,1,3,2,2,1).$ 

\[\xymatrixrowsep{5mm}
\xymatrixcolsep{5mm}
\xymatrix{ && &&{1\choose 2}\ar@/_{1pc}/[rdd]|3 && {1\choose 3}\ar[ll]&
\\ 
&& & & && &{2\choose 3}
\\
&&&& &{3\choose 2}\ar@/_{-1pc}/[ruu]|3& &}
\]

By applying $\mu_{{1\choose 3}}$ to the above quiver, we get the following quiver.

\[\xymatrixrowsep{5mm}
\xymatrixcolsep{5mm}
\xymatrix{ && &&{1\choose 2}\ar[rr] && {1\choose 3}\ar@/_{1pc}/[ldd]|3&
\\ 
&& & & && &{2\choose 3}
\\
&&&& &{3\choose 2}& &}
\]

We can continue the previous process or we just stop here, because the above quiver is acyclic and thus is already in the class $\mathcal P^\prime$.
\end{example}

\bibliographystyle{alpha}
\bibliography{myref}

\end{document}